\newtheorem{theorem}{Theorem}
\newtheorem{lemma}{Lemma}
\newtheorem{definition}{Definition}
\newtheorem{notation}{Notation}
\newtheorem{remark}{Remark}
\newcommand{\tdeg}{\mathrm{tdeg}}
\newcommand{\s}[1]{\mbox{\sf #1}}
\newenvironment{glists}[4]{
                 \begin{list}{}{
                     \setlength{\labelwidth}{#2}
                     \setlength{\labelsep}{#3}
                     \setlength{\leftmargin}{#1}
                     \addtolength{\leftmargin}{\labelwidth}
                     \addtolength{\leftmargin}{\labelsep}
                     \setlength{\parsep}{#4}
                     \setlength{\topsep}{\parsep}
                     \setlength{\itemsep}{\parsep}
                     \setlength{\listparindent}{0in}
                     }
                 }{
                 \end{list}
                 }
\newenvironment{clists}{
\begin{glists}{0em}{3.5em}{0.5em}{0.5em}
}{
\end{glists}
}
\newcommand{\citem}[1]{\item[\sf #1\hfill]}
\newbox\bracku
\def\upbrackfill{$\m@th\copy\bracku\leaders\hrule\hfill\copy\bracku$}
\def\underbrack#1{
  \mathop{
    \vtop{
      \ialign{
        ##\crcr$\hfil\displaystyle{\kern1pt#1\kern1pt}
        \hfil$\crcr\noalign{\kern3\p@\nointerlineskip}
        \upbrackfill\crcr\noalign{\kern3\p@}
      }
    }
  }
  \limits
}
\begin{document}

\title{Maximum  Gap in (Inverse) Cyclotomic Polynomial}

\author{Hoon Hong, Eunjeong Lee, Hyang-Sook Lee and Cheol-Min Park}

\maketitle

\begin{abstract}
Let $g(f)$
denote the maximum of the differences (gaps) between two consecutive exponents
occurring in  a polynomial $f$.
Let $\Phi_n$ denote the $n$-th cyclotomic polynomial and let $\Psi_n$ denote the $n$-th inverse cyclotomic polynomial. 
In this note, we study $g(\Phi_n)$ and $g(\Psi_n)$ where $n$ is a product of odd primes, say $p_1 < p_2 < p_3$, etc. 
It is trivial to determine  $g(\Phi_{p_1})$, $g(\Psi_{p_1})$ and $g(\Psi_{p_1p_2})$. 
Hence the simplest non-trivial cases are $g(\Phi_{p_1p_2})$ and $g(\Psi_{p_1p_2p_3})$.
We provide an exact expression for $g(\Phi_{p_1p_2}).$ We also provide an exact expression for 
 $g(\Psi_{p_1p_2p_3})$ under a mild condition. The condition is almost always satisfied (only finite exceptions for each $p_1$). We also provide a lower bound and an upper bound for $g(\Psi_{p_1p_2p_3})$.
\end{abstract}

\section{Introduction}

The $n$-th cyclotomic polynomial $\Phi_n$ and the $n$-th inverse cyclotomic polynomial $\Psi_n$ are defined by
\[
\Phi_n(x) \;=\; \prod_{\substack{1\leq j\leq n\\(j,n)=1}} (x-\zeta_n^j)\;\;\;\;\;\;\;\;\;\;\;\;\;\;\;
\Psi_n(x) \;=\; \prod_{\substack{1\leq j\leq n\\(j,n)>1}} (x-\zeta_n^j)
\]
where $\zeta_n$ is a primitive $n$-th root of unity.
For example,
we have
\begin{eqnarray*}
\Phi_{3}(x)          &=& 1+x+{x}^{2}\\
\Phi_{3\cdot 5}(x)   &=& 1-x+{x}^{3}-{x}^{4}+{x}^{5}-{x}^{7}+{x}^{8}\\
\Psi_{3}(x)          &=& -1+x\\
\Psi_{3\cdot 5}(x)   &=& -1-x-{x}^{2}+{x}^{5}+{x}^{6}+{x}^{7}
\\
\Psi_{3\cdot 5\cdot 7}(x) &=&  -1+x-{x}^{3}+{x}^{4}-{x}^{5}-{x}^{10}+{x}^{11}-{x}^{12}-{x}^{17}+{x}^{
18}-{x}^{19}+{x}^{21}-{x}^{22}\\
&&+{x}^{35}-{x}^{36}+{x}^{38}-{x}^{39}+{x}
^{40}+{x}^{45}-{x}^{46}+{x}^{47}+{x}^{52}-{x}^{53}+{x}^{54}-{x}^{56}+{
x}^{57}
\end{eqnarray*}

There have been extensive studies on the coefficients of cyclotomic polynomials~\cite{
Bachman2003,%
Beiter1978,%
Endo1974,%
JiLi2008,%
Lehmer1936,%
JiLiMor09,%
Suzuki1987,%
Thangadurai1999}.
Recently there have been also studies on the coefficients of inverse cyclotomic polynomials~\cite{
Mor09,%
Bzdega10%
}.
In this note, we study the exponents  of (inverse) cyclotomic polynomials. In particular, we are interested in  the {\em maximum gap}, $g(f)$, which is the maximum of the differences (gaps) between two consecutive exponents
occurring in~$f$ where $f=\Phi_n$ or $f=\Psi_n$.
More precisely the maximum gap is defined as follows:
\begin{definition}[Maximum Gap]\label{def:maxgap}
Let $f = c_1x^{e_1}+\cdots c_tx^{e_t}$ 
where $c_1,\ldots,c_t \neq 0$ and $e_1 < \cdots < e_t$. 
Then the maximum gap of $f$, written as $g(f)$, is defined
by
\[
g(f) = \max_{1\leq i < t} (e_{i+1}-e_i),\;\;\;\; g(f)=0\;\;\mbox{when } \;t=1
\]
\end{definition}
\noindent For example, $g(\Phi_{3\cdot5})=2$ because $2$ is the maximum among   $1-0,\;3-1,\;4-3,\;5-4,\;7-5,\;8-7$.

It can be visualized by the following diagrams where a long bar represents a polynomial.
The black color indicates that the corresponding exponent (term) occurs in the polynomial
and the white color indicates that it does not.

{
\tiny
\psset{unit=0.25}
\[
\begin{array}{lll}
\Phi_{3} &:& \begin{pspicture}(0.0,0.6)(58,1.5)
\rput(0.5,2.0){0}
\rput(2.5,2.0){2}
\psframe[linewidth=1pt](0.0,0)(3,1.5)
\psframe[linewidth=1pt,fillstyle=solid,fillcolor=darkgray](0,0)(3,1.5)
\multips(0,0)(1,0){3}{\psline[linewidth=1pt](0,0)(0,1.5)}
\end{pspicture}\\
\Phi_{3\cdot5} &:& \begin{pspicture}(0.0,0.6)(58,3.0)
\rput(0.5,2.0){0}
\rput(8.5,2.0){8}
\psframe[linewidth=1pt](0.0,0)(9,1.5)
\psframe[linewidth=1pt,fillstyle=solid,fillcolor=darkgray](0,0)(2,1.5)
\psframe[linewidth=1pt,fillstyle=solid,fillcolor=darkgray](3,0)(6,1.5)
\psframe[linewidth=1pt,fillstyle=solid,fillcolor=darkgray](7,0)(9,1.5)
\multips(0,0)(1,0){9}{\psline[linewidth=1pt](0,0)(0,1.5)}
\end{pspicture}\\
\Psi_{3} &:&
\begin{pspicture}(0.0,0.6)(58,3.0)
\rput(0.5,2.0){0}
\rput(1.5,2.0){1}
\psframe[linewidth=1pt](0.0,0)(2,1.5)
\psframe[linewidth=1pt,fillstyle=solid,fillcolor=darkgray](0,0)(2,1.5)
\multips(0,0)(1,0){3}{\psline[linewidth=1pt](0,0)(0,1.5)}
\end{pspicture}\\
\Psi_{3\cdot 5} &:&
\begin{pspicture}(0.0,0.6)(58,3.0)
\rput(0.5,2.0){0}
\rput(7.5,2.0){7}
\psframe[linewidth= 1pt](0.0,0)(8,1.5)
\psframe[linewidth=1pt,fillstyle=solid,fillcolor=darkgray](0,0)(3,1.5)
\psframe[linewidth=1pt,fillstyle=solid,fillcolor=darkgray](5,0)(8,1.5)
\multips(0,0)(1,0){8}{\psline[linewidth=1pt](0,0)(0,1.5)}
\end{pspicture}\\
\Psi_{3\cdot 5 \cdot 7} &:&
\begin{pspicture}(0.0,0.6)(58,3.0)
\rput(0.5, 2.0){0}
\rput(57.5,2.0){57}
\psframe[linewidth=1pt](0.0,0)(58.0,1.5)
        \psframe[linewidth=1pt,fillstyle=solid,fillcolor=darkgray](0,0)(2,1.5)
\psframe[linewidth=1pt,fillstyle=solid,fillcolor=darkgray](3,0)(6,1.5)
\psframe[linewidth=1pt,fillstyle=solid,fillcolor=darkgray](10,0)(13,1.5)
\psframe[linewidth=1pt,fillstyle=solid,fillcolor=darkgray](17,0)(20,1.5)
\psframe[linewidth=1pt,fillstyle=solid,fillcolor=darkgray](21,0)(23,1.5)
\psframe[linewidth=1pt,fillstyle=solid,fillcolor=darkgray](35,0)(37,1.5)
\psframe[linewidth=1pt,fillstyle=solid,fillcolor=darkgray](38,0)(41,1.5)
\psframe[linewidth=1pt,fillstyle=solid,fillcolor=darkgray](45,0)(48,1.5)
\psframe[linewidth=1pt,fillstyle=solid,fillcolor=darkgray](52,0)(55,1.5)
\psframe[linewidth=1pt,fillstyle=solid,fillcolor=darkgray](56,0)(58,1.5)
\multips(0,0)(1,0){58}{\psline[linewidth=1pt](0,0)(0,1.5)}
\end{pspicture}
\end{array}
\]
}

\noindent 
One immediately notices that the maximum gap is essentially the length of a longest 
white block  {\em plus\/}~$1$. 
For example, a longest white block in $\Phi_{3\cdot5}$ has length 1. Hence~$g(\Phi_{3\cdot5})=1+1=2$.

Our initial motivation came from its need for analyzing the complexity~\cite{HLLP11a}
of a certain paring operation over elliptic curves~\cite{
FST,%
LLP,%
ZZH%
}.  
However, it seems to be  a curious problem on its own and it could be also viewed as a first step
toward the detailed understanding of the sparsity structure of~$\Phi_n$ and~$\Psi_n$.

In this note, we tackle the simplest non-trivial cases, namely,  
$g(\Phi_{p_1p_2})$ and $g(\Psi_{p_1p_2p_3})$ where $p_1 <p_2<p_3 $ are odd primes. 
As far as we are aware,  there were  no  published results on this problem. We will provide an exact expression for $g(\Phi_{p_1p_2})$ in Theorem~\ref{theorem:c2p}. 
We will also provide an exact expression for $g(\Psi_{p_1p_2p_3})$ under a mild condition  in Theorem~\ref{theorem:ic3p}.
In Remark~\ref{rem:freq} we will show that the condition is very mild. Finally we will provide a lower bound and an upper bound for $g(\Psi_{p_1p_2p_3})$ in Theorem~\ref{theorem:icbound}.  

In order to obtain the results, we had to overcome a few difficulties. 
It can be easily shown that $\Phi_{p_1p_2}$ and  $\Psi_{p_1p_2p_3}$ are sums and products of 
simple polynomials with trivial gap structures.  However adding and multiplying them could
introduce new gaps,  eliminate existing gaps or change the sizes of existing gaps 
etc, in intricate manners, via accumulation or cancellation of terms, making the analysis very challenging. 
We overcame the obstacles in two ways: (1) find mild conditions on $p_1,p_2,p_3$ 
that ensure that accumulation or cancellation do not occur.
(2) find mild conditions  that allow us to bound the sizes of  gaps arising 
from accumulation or cancellation and show that such gaps cannot be the maximum gap. 

This note is structured as follows. In the following section (Section~\ref{sec:trivial}), we will  quickly take care of trivial cases,
in order to identify the simplest non-trivial cases to tackle. A reader can safely skip over this section. In the subsequent section (Section~\ref{sec:main}), we will provide the main results on the simplest non-trivial cases. In the final section, we will prove the main results (Section~\ref{sec:proof}).

\section{Trivial Cases}\label{sec:trivial}
In this section, we will  quickly take care of trivial cases,
in order to identify the simplest non-trivial cases that will be  tackled in the next section. A reader can safely skip over this section. In the following we will use basic properties of (inverse) cyclotomic polynomials without explicit references. The basic properties of cyclotomic polynomials  can be found in any standard textbooks. The basic properties of inverse cyclotomic polynomials can be found in Lemma 2 of~\cite{Mor09}. 

\begin{itemize}
\item Since 
\[\Phi_n(x) = \Phi_{\hat{n}}(x^{\frac{n}{\hat{n}}}) \;\;\;\;\;\;\;\;
\Psi_n(x) = \Psi_{\hat{n}}(x^{\frac{n}{\hat{n}}})\]
we  immediately have 
$$g(\Phi_{n}) = \frac{n}{\hat{n}}g(\Phi_{\hat{n}}),\;\;\;\;\;\;\;g(\Psi_{n}) = \frac{n}{\hat{n}}g(\Psi_{\hat{n}})$$ 
where $\hat{n}$ is the radical of $n$. Thus we will, without losing generality, restrict $n$ to be squarefree. 

\item Since 
\[\Phi_{2n}(x)=\pm\Phi_n(-x) \;\;\;\;\;\;\;\;\;\Psi_{2n}(x) = \pm(1-x^n)\Psi_n(-x)\]
for odd $n$, we immediately have 
\[
g(\Phi_{2n})=g(\Phi_n)\;\;\;\;\;\;\;\;\;\;\;\;\;\;\;\;g(\Psi_{2n})= \max\{g(\Psi_n),\deg(\Phi_n)\}
\]
Thus we will, without losing generality, further restrict $n$ to be  squarefree and odd, that is, a product of zero or more distinct odd primes. 

\item Consider the case when $n$ is a product of {\em zero }odd primes, that is $n=1$. Since 
\[\Phi_1(x)=-1+x \;\;\;\;\;\;\;\Psi_1(x)=1\] 
we have
\[
g(\Phi_1)=1\;\;\;\;\;\;\;\;\;\;\;\;\;\;\;g(\Psi_1)=0
\]

\item Consider the case when $n$ is a product of {\em one }odd primes, that is $n=p_1$. Since 

\[\Phi_{p_1}(x)=1+x+\cdots+x^{p_1-1}\;\;\;\;\;\;\;\;\Psi_{p_1}(x) = -1+x\] 
we   have  
\[
g(\Phi_{p_1})=1\;\;\;\;\;\;\;\;\;\;\;g(\Psi_{p_1})=1
\]

\item Consider the case when $n$ is a product of {\em two }odd primes, that is $n=p_1p_2$ where $p_1<p_2$.  Since 
\[
\Psi_{p_1p_2}(x)=-(1+x+\cdots+x^{p_1-1})+(x^{p_2}+x^{p_2+1}+\cdots+x^{p_2+p_1-1})
\] 
we  have 
\[
 g(\Psi_{p_1p_2})=p_2 - (p_1 - 1)
\]
\end{itemize}
Hence the simplest non-trivial cases are $g(\Phi_{p_1p_2})$ and $g(\Psi_{p_1p_2p_3})$. We will tackle these cases in the following section.

\section{Main Results}\label{sec:main}
In this section, 
we tackle the simplest non-trivial cases identified in the previous section. In particular, we provide an exact expression for $g(\Phi_{p_1p_2})$ in Theorem~\ref{theorem:c2p}. 
We also provide an exact expression for $g(\Psi_{p_1p_2p_3})$ under a mild condition  in Theorem~\ref{theorem:ic3p}.
In Remark~\ref{rem:freq} we show that the condition is very mild. Finally we provide a lower bound and an upper bound for $g(\Psi_{p_1p_2p_3})$ in Theorem~\ref{theorem:icbound}. 

\begin{theorem}\label{theorem:c2p}
Let $n=p_1p_2$ where $p_1<p_2$ are odd primes. Then we have
\[
g(\Phi_n) \;\;=\;\;p_1-1
\]
\end{theorem}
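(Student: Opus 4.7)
The plan is to reformulate the problem in terms of the numerical semigroup $R=\{ip_1+jp_2:i,j\ge 0\}$ and then read off the gap structure directly from its indicator function. Starting from the identity $\Phi_n(x)(1-x^{p_1})(1-x^{p_2})=(1-x)(1-x^{p_1p_2})$ together with the formal expansion $\bigl((1-x^{p_1})(1-x^{p_2})\bigr)^{-1}=\sum_{i,j\ge 0}x^{ip_1+jp_2}$, for $0\le n<p_1p_2$ the coefficient of $x^n$ in $\Phi_n$ equals $N(n)-N(n-1)$, where $N(m)=\#\{(i,j)\in\mathbb{Z}_{\ge 0}^{2}:ip_1+jp_2=m\}$. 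Any two distinct representations of the same value would force $p_2\mid(i_1-i_2)$ and $p_1\mid(j_2-j_1)$, so the value would be $\ge p_1p_2$; hence $N(m)\in\{0,1\}$ throughout the exponent range. Consequently $n$ is an exponent of $\Phi_n$ iff the indicator $\mathbb{1}_R$ (with $\mathbb{1}_R(-1):=0$) flips value between $n-1$ and $n$, and $g(\Phi_n)$ equals the length of the longest maximal constant block of $\mathbb{1}_R$ contained in $[0,(p_1-1)(p_2-1)-1]$.

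For the lower bound, the smallest positive element of $R$ is $p_1$, so $\{1,\ldots,p_1-1\}\cap R=\emptyset$ while $0,p_1\in R$. This yields a hole-block of length $p_1-1$ flanked by $R$-elements, hence consecutive exponents $1$ and $p_1$ with gap exactly $p_1-1$.

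For the upper bound, I will bound hole-blocks and $R$-blocks separately. Any $p_1$ consecutive non-negative integers contain a multiple of $p_1$, which lies in $R$, so no hole-block has length $\ge p_1$. For an $R$-block $\{n,\ldots,n+p_1-1\}\subseteq R$, I invoke the Ap\'ery structure of $R$ with respect to $p_1$: the minimum element of $R$ in each residue class $r\pmod{p_1}$ is $ap_2$, with $a\in\{0,\ldots,p_1-1\}$ determined by $ap_2\equiv r\pmod{p_1}$. Picking the residue $(p_1-1)p_2\pmod{p_1}$, the block member $m$ in that class must satisfy $m\ge(p_1-1)p_2$; combined with $m\le n+p_1-1$ this forces $n\ge(p_1-1)(p_2-1)$, so no $R$-block of length $\ge p_1$ can fit inside $[0,(p_1-1)(p_2-1)-1]$. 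Combining both bounds gives $g(\Phi_n)\le p_1-1$.

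The delicate point is the $R$-block bound: long runs of consecutive elements of $R$ really do exist in $\mathbb{Z}_{\ge 0}$ (by Sylvester--Frobenius every integer $\ge(p_1-1)(p_2-1)$ lies in $R$), so one must carefully restrict attention to $[0,(p_1-1)(p_2-1)-1]$ and pick out the extremal Ap\'ery residue class $(p_1-1)p_2\pmod{p_1}$ to exclude them in that range. The other steps—passage from the coefficient formula to the ``indicator-flip'' description, and bounding hole-blocks—are essentially bookkeeping.
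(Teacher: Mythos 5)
Your proof is correct, and it takes a genuinely different route from the paper's. The paper starts from the Lam--Leung/Lenstra two-block representation $\Phi_{p_1p_2}=AB+CD$ and develops general lemmas (Lemmas~\ref{lemma:gap_add} and~\ref{lemma:gap_mul}) bounding how the maximum gap behaves under sums without cancellation and under products of single-signed polynomials; it then needs a careful analysis of how $AB$ and $CD$ interleave, plus a separate contradiction argument, just to sharpen the resulting bound from $p_1$ to $p_1-1$. You instead divide out $(1-x^{p_1})(1-x^{p_2})$ and observe that, because representations $ip_1+jp_2$ of integers below $p_1p_2$ are unique, the coefficients of $\Phi_{p_1p_2}$ are exactly the first differences of the $0$--$1$ indicator of the numerical semigroup $R=\langle p_1,p_2\rangle$, so the maximum gap is the longest maximal constant run of that indicator below the conductor $(p_1-1)(p_2-1)$. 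Hole-runs of length $p_1$ are excluded by multiples of $p_1$, and $R$-runs of length $p_1$ are excluded by the Ap\'ery element $(p_1-1)p_2$: you correctly flagged this last step as the delicate one, since long $R$-runs do exist past the conductor, and your restriction to $[0,(p_1-1)(p_2-1)-1]$ together with the extremal residue class handles it cleanly. What the paper's approach buys is a pair of reusable gap lemmas that are deployed again in the proofs for $\Psi_{p_1p_2p_3}$; what yours buys is a shorter, self-contained and arguably more conceptual proof of this particular theorem, at the price of not transferring as directly to the ternary inverse cyclotomic case.
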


\begin{theorem}\label{theorem:ic3p}
Let $n=p_1p_2p_3$ where $p_1<p_2<p_3$ are odd primes  satisfying the condition:
\begin{equation}\label{eq:cond}
p_2 \;\; \geq \;\; 4(p_1-1)\;\;\;\; \mbox{or} \;\;\;\; p_3 \;\;\geq \;\;p_{1}^2
\end{equation}
 Then we have
\begin{equation*}\label{eq:exact_gap_formula_lambda}
g(\Psi_n) \;\;=\;\; 2n\;\frac{1}{p_1} -\deg(\Psi_n)
\end{equation*}

\end{theorem}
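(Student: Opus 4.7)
The plan is to factor $\Psi_n$ so that the central gap is exposed structurally, then show all other gaps are strictly smaller. Grouping the divisors of $n$ not divisible by $p_1$, namely $\{1, p_2, p_3, p_2 p_3\}$, we have $\Phi_1 \Phi_{p_2} \Phi_{p_3} \Phi_{p_2 p_3} = x^{p_2 p_3} - 1$, and since $\Psi_n = \prod_{d \mid n,\, d < n} \Phi_d$, this gives
\[
\Psi_n(x) = (x^{p_2 p_3} - 1)\, G(x), \qquad G(x) := \Phi_{p_1}(x)\, \Phi_{p_1 p_2}(x)\, \Phi_{p_1 p_3}(x),
\]
where $\deg G = (p_1-1)(p_2 + p_3 - 1)$.

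Next I would extract the central gap from $\Psi_n(x) = x^{p_2 p_3} G(x) - G(x)$. A short rearrangement shows $\deg G < p_2 p_3$ is equivalent to $(p_2 - p_1 + 1)(p_3 - p_1 + 1) > (p_1-1)(p_1-2)$, which follows from either hypothesis of the theorem. Hence the supports of $G$ and $x^{p_2 p_3} G$ are contained in disjoint intervals, so the coefficient of $x^e$ in $\Psi_n$ vanishes for all $\deg G < e < p_2 p_3$, while $\deg G$ and $p_2 p_3$ do occur as exponents in $\Psi_n$ (with $\pm 1$ coefficients, using that $G$ has nonzero constant and leading terms as a product of cyclotomic polynomials). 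This exhibits a gap of size $p_2 p_3 - \deg G$, which an elementary expansion confirms equals $2 n / p_1 - \deg \Psi_n$.

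To conclude, it remains to show that no gap internal to $G$ (equivalently, internal to $x^{p_2 p_3} G$) exceeds $p_2 p_3 - \deg G$. Using the identity $\Phi_{p_1}(x) \Phi_{p_1 p_3}(x) = (x^{p_1 p_3} - 1)/(x^{p_3} - 1) = 1 + x^{p_3} + \cdots + x^{(p_1-1) p_3}$, I rewrite
\[
G(x) = \Phi_{p_1 p_2}(x) \cdot \sum_{b=0}^{p_1 - 1} x^{b p_3},
\]
which exhibits $G$ as a sum of $p_1$ copies of $\Phi_{p_1 p_2}$ shifted by successive multiples of $p_3$. In the easy sub-regime where the shifts are disjoint ($p_3 > (p_1-1)(p_2-1)$), internal gaps of $G$ are either inherited from a single shift, of size at most $g(\Phi_{p_1 p_2}) = p_1 - 1$ by Theorem~\ref{theorem:c2p}, or bridge two consecutive shifts, of size $p_3 - (p_1-1)(p_2-1)$; elementary inequalities put both strictly below $p_2 p_3 - \deg G$.

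The main obstacle is the overlapping sub-regime, where the $\pm 1$ coefficients of $\Phi_{p_1 p_2}$ across shifted copies can cancel or reinforce, potentially producing fresh gaps in unpredictable locations. Here the two alternative hypotheses do real work. I would plan to unpack the Lam--Leung representation of $\Phi_{p_1 p_2}$ as a signed indicator of two rectangular index sets in the set $\{i p_1 + j p_2 : i,j \geq 0\}$, track $[x^e] G = \sum_b [x^{e - b p_3}] \Phi_{p_1 p_2}$ explicitly, and bound the length of any run of vanishing coefficients. Heuristically, $p_2 \geq 4(p_1-1)$ keeps each shifted copy ``dense'' enough that overlap-induced blank intervals stay short, while $p_3 \geq p_1^2$ limits how many shifts can overlap at any single exponent; reconciling these two cases with the target bound $p_2 p_3 - \deg G$ is the combinatorial heart of the proof.
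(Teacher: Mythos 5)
Your setup coincides with the paper's: your $G$ is exactly the paper's $A_0=\Phi_{p_1p_2}(x)\,\Phi_{p_1}(x^{p_3})$, the central gap $\lambda=p_2p_3-\deg G=2n/p_1-\deg(\Psi_n)$ is identified the same way, and your rewriting of $G$ as $\Phi_{p_1p_2}(x)\sum_{b=0}^{p_1-1}x^{bp_3}$ is the paper's decomposition into shifted copies $B_0,\dots,B_{p_1-1}$. Your treatment of the disjoint sub-regime $p_3>\varphi(p_1p_2)$ is also correct. But the proposal has a genuine gap: the overlapping sub-regime, which is precisely where the hypothesis~(\ref{eq:cond}) must be used (e.g.\ $p_1=3$, $p_2=11$, $p_3=13$ satisfies $p_2\geq 4(p_1-1)$ yet has overlapping copies), is left as a plan rather than a proof. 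You propose to track $[x^e]G=\sum_b[x^{e-bp_3}]\Phi_{p_1p_2}$ via the Lam--Leung representation and to bound runs of vanishing coefficients, and you explicitly call the reconciliation of the two hypotheses with the target bound "the combinatorial heart of the proof" without carrying it out. Nothing in the proposal shows that cancellation among overlapping $\pm1$ coefficients cannot create a blank interval of length $\geq\lambda$ inside $G$.

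For comparison, the paper avoids that coefficient-level combinatorics entirely. It first replaces the stated condition by two softer ones (Lemma~\ref{lemma:D1D2_C}): either $\s{D1}$: $2n/p_1>\tfrac{4}{3}\deg(\Psi_n)$, or $\s{D2}$: $2p_3>p_2(p_1-1)$. Under $\s{D1}$ one gets $\lambda>\deg(A_0)\geq g(A_0)$, so no information about the internal structure of $G$ is needed at all. Under $\s{D2}$ one has $\varphi(p_1p_2)<2p_3-(p_1-1)$, so at most two consecutive copies $B_i$, $B_{i+1}$ overlap; picking the largest exponent $\alpha<p_3$ occurring in $B_0$ (so $p_3-\alpha\leq p_1-1$), the exponents $\alpha+ip_3$ survive uncancelled, and cutting $G$ at these points yields translation-equivalent blocks $L_i$ of length exactly $p_3$ with occurring endpoints, whence $g(A_0)\leq\max\{p_1-1,\,p_3\}<\lambda$. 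If you want to complete your argument, you should either prove a statement of this kind or supply the full coefficient analysis you sketched; as written, the theorem is not established.
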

\begin{theorem}\label{theorem:icbound}
Let $n=p_1p_2p_3$ where $p_1<p_2<p_3$ are odd primes. Then we have
\[
\max \{\;p_1-1,\; 2n\;\frac{1}{p_1}-\deg(\Psi_n)\;\}\;\;\;\; \leq \;\;\;\; g(\Psi_n) \;\;\;\;<\;\;\;\; 2n\;\left(\frac{1}{p_1}+\frac{1}{p_2}+\frac{1}{p_3}\right)-\deg(\Psi_n)
\]
\end{theorem}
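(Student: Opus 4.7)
The plan is to work throughout with the factorization
\[
\Psi_n(x) \;=\; (x-1)\,F(x), \qquad F(x) \;:=\; \Phi_{p_1}(x^{p_2})\,\Phi_{p_2}(x^{p_3})\,\Phi_{p_3}(x^{p_1}),
\]
obtained from the inclusion-exclusion identity $\Psi_n = \frac{(x^{p_1p_2}-1)(x^{p_1p_3}-1)(x^{p_2p_3}-1)(x-1)}{(x^{p_1}-1)(x^{p_2}-1)(x^{p_3}-1)}$ by pairing each $(x^{p_ip_j}-1)$ with the denominator factor $(x^{p_k}-1)$ such that the quotient is $\Phi_{p_\ell}(x^{p_k})$. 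Writing $F=\sum_i F_i x^i$, the coefficient $F_i$ counts triples $(a,b,c)\in[0,p_1)\times[0,p_2)\times[0,p_3)$ with $a p_2+b p_3+c p_1 = i$; since the coefficient of $x^k$ in $\Psi_n$ equals $F_{k-1}-F_k$, a gap of length $L$ in the support of $\Psi_n$ corresponds exactly to a maximal constant run of length $L$ in the sequence $(F_i)$. Moreover $F$ is palindromic (each factor $\Phi_{p_i}(x^{p_j})$ is), so $\Psi_n$ is anti-palindromic about $\deg\Psi_n/2$.

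For $g(\Psi_n)\ge p_1-1$: because $p_1<p_2,p_3$, the only triple giving a sum in $\{0,1,\ldots,p_1-1\}$ is $(0,0,0)\mapsto 0$, so $F_0=1$, $F_1=\cdots=F_{p_1-1}=0$, and $F_{p_1}\ge 1$ (realized by $(0,0,1)$). Thus in $\Psi_n$ the exponents $1$ and $p_1$ both appear with no exponent in between, producing a gap of $p_1-1$.

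For $g(\Psi_n)\ge 2p_2p_3-\deg\Psi_n$ (one may assume this is positive, else vacuous): a short computation gives $\deg\Psi_n-p_2p_3=(p_1-1)(p_2+p_3-1)$, so the interval $I:=[\deg\Psi_n-p_2p_3,\,p_2p_3-1]$ contains exactly $2p_2p_3-\deg\Psi_n$ integers. I plan to prove $F_i=p_1$ for every $i\in I$. Fix $a\in[0,p_1)$; then $j:=i-a p_2$ lies in $[(p_1-1)(p_3-1),\,p_2p_3-1]$. Because $(p_1-1)(p_3-1)$ is one more than the Frobenius number of the pair $(p_1,p_3)$, we may write $j=\alpha p_1+\beta p_3$ with $\alpha,\beta\ge 0$; reducing $\alpha$ modulo $p_3$ yields a unique representation $j=c p_1+b p_3$ with $c\in[0,p_3)$ and $b\in\mathbb{Z}_{\ge 0}$, and the bound $j\le p_2p_3-1$ forces $b<p_2$. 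Hence each $a$ contributes exactly one triple, giving $F_i=p_1$ throughout $I$. To pin down that the corresponding gap in $\Psi_n$ truly has length at least $2p_2p_3-\deg\Psi_n$, observe $F_{p_2p_3}<p_1$: the equation $p_2p_3=b p_3+c p_1$ with $b<p_2,c<p_3$ has no solution (reducing mod $p_3$ forces $c=0$, then $b=p_2$), so the $a=0$ contribution vanishes. By anti-palindromy, $F$ also drops strictly below $p_1$ just below $\deg\Psi_n-p_2p_3$, so the maximal constant run of value $p_1$ is exactly $I$, delivering a gap between the support elements $\deg\Psi_n-p_2p_3$ and $p_2p_3$ of length $2p_2p_3-\deg\Psi_n$.

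For the upper bound, arithmetic gives
\[
2n\left(\tfrac{1}{p_1}+\tfrac{1}{p_2}+\tfrac{1}{p_3}\right)-\deg\Psi_n \;=\; \deg\Psi_n+2(p_1+p_2+p_3)-2,
\]
which strictly exceeds $\deg\Psi_n$ since $p_1,p_2,p_3\ge 3$. Since every gap in $\Psi_n$ is at most $\deg\Psi_n$ (all exponents lie in $[0,\deg\Psi_n]$), the strict inequality follows at once. The main obstacle is the middle step of the lower bound: the Frobenius-type verification that $F_i=p_1$ holds on the full interval $I$, which hinges on confirming that the normalized $b$ remains within $[0,p_2)$ for every admissible $a$ and $i$, particularly near the extreme ends of $I$.
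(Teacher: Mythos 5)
Your proof is correct, and it takes a genuinely different route from the paper's. For the lower bound the paper (Lemma~\ref{lemma:low_bound_op_op_op}) uses the factorization $\Psi_n=\Phi_{p_1p_2}(x)\,\Phi_{p_1}(x^{p_3})\,(-1+x^{p_2p_3})=-A_0+x^{p_2p_3}A_0$ and simply reads off the gap $\lambda=\tdeg(A_1)-\deg(A_0)=2p_2p_3-\deg(\Psi_n)$ between the two non-overlapping blocks, together with the initial segment $-1+x-x^{p_1}$ for the gap $p_1-1$; you instead pass to $F=\Psi_n/(x-1)$, identify gaps of $\Psi_n$ with constant runs of the representation-counting sequence $F_i=\#\{(a,b,c):ap_2+bp_3+cp_1=i\}$, and prove by a Frobenius-number argument that $F_i=p_1$ on all of $[\deg(\Psi_n)-p_2p_3,\,p_2p_3-1]$ with strict drops at both ends. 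That is more work than the paper's two-line block observation, but it buys the exact location and maximality of the run; and the counting is sound --- the step you flag as the main obstacle is already closed, since reducing $c$ modulo $p_3$ starting from a nonnegative representation keeps $b\ge 0$, and $bp_3\le j\le p_2p_3-1$ forces $b<p_2$ at every point of the interval. For the upper bound you observe that the claimed bound equals $\deg(\Psi_n)+2(p_1+p_2+p_3)-2>\deg(\Psi_n)$, so the trivial estimate $g(\Psi_n)\le\deg(\Psi_n)$ suffices; this is markedly simpler than the paper's Lemma~\ref{lemma:upper_bound_op_op_op}, whose Claim 1 proves the sharper intermediate bound $g(\Psi_n)\le\max\{p_1-1,\;\deg(\Psi_n)-2(p_3-(p_1-1))\}$ --- information the theorem as stated does not actually need. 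One cosmetic slip: the drop of $F$ below $p_1$ just to the left of $\deg(\Psi_n)-p_2p_3$ follows from the palindromy of $F$, not the anti-palindromy of $\Psi_n$.
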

\begin{remark}\label{rem:freq}\rm We make several remarks.
\begin{itemize} 
\item Note that the condition~(\ref{eq:cond}) in Theorem~\ref{theorem:ic3p}  is ``almost always'' satisfied. Thus we ``almost always'' have  $$g(\Psi_n) \;\;=\;\; 2n\;\frac{1}{p_1}-\deg(\Psi_n)$$
More precisely,
for each $p_1,$ only finitely many out of infinitely many  $(p_2,p_3)$ violate the condition~(\ref{eq:cond}).   

\item Let $V_{p_1}$ be the finite set of $(p_2,p_3)$ violating the condition~(\ref{eq:cond}).  
For several small $p_1$ values and for every $(p_2,p_3) \in V_{p_1},$ we carried out  direct calculation of $g(\Psi_n),$ 
 obtaining the following frequency table 

\begin{center}
\begin{tabular}{|c|c|c|c|c|}
\hline
$p_1$ & $\# V_{p_1}  \s{}$ & $\# V_{p_1}^{(1)}$& $\# V_{p_1}^{(2)}$& $\# V_{p_1}^{(3)}$ \\
\hline
  3   &    1    & 1     &   0  &   0 \\
  5   &    12   & 12    &   0  &   0 \\
  7   &    40   & 39    &   0  &   1 \\
  11  &    147  & 137   &   9 &   1 \\
  13  &    252  & 244   &   6  &   2 \\
  17  &    528  & 504   &   23 &   1 \\
  19  &    690  & 671   &   18 &   1 \\
  23  &    1155 & 1126  &   27 &   2 \\
  \hline
\end{tabular}
\end{center}
where
\begin{eqnarray*}
 V_{p_1}^{(1)} &=& \{(p_2,p_3) \in V_{p_1}\; :\; g(\Psi_n)=2n\frac{1}{p_1}-\deg(\Psi_n)\}\\
 V_{p_1}^{(2)} &=& \{(p_2,p_3) \in V_{p_1}\; :\;g(\Psi_n)=p_1-1\}\\
 V_{p_1}^{(3)} &=& V_{p_1}  \; - \; \left(V_{p_1}^{(1)} \; \cup \; V_{p_1}^{(2)}\right)
\end{eqnarray*}
\item
The table suggests  that even among the finite set $V_{p_1}$, we have  almost always
\[g(\Psi_n) \;\;=\;\; 2n\;\frac{1}{p_1}-\deg(\Psi_n)\]
and sometimes
\[g(\Psi_n) \;\;=\;\;p_1 -1\]
and very rarely 
\[
g(\Psi_n) > \max \{\;p_1-1,\; 2n\;\frac{1}{p_1}-\deg(\Psi_n)\;\}
\] 
\item In fact, when $p_1=3$ or $5$, the table shows that \[g(\Psi_n) \;\;=\;\; 2n\;\frac{1}{p_1}-\deg(\Psi_n)\]
\item  It is important to recall that for each $p_1$, for instance $p_1=23$, 
there are infinitely many possible values for  $(p_2,p_3)$. 
The table shows that for those {\em infinitely many\/} possible values of $(p_2,p_3)$,
the maximum gap is exactly the lower bound in Theorem~\ref{theorem:icbound}, namely,
\[
g(\Psi_n) = \max \{\;p_1-1,\; 2n\;\frac{1}{p_1}-\deg(\Psi_n)\;\}\]
{\em except for only two\/} values of $(p_2,p_3)$. In other words, it seems that the lower bound in Theorem~\ref{theorem:icbound} is almost always exactly the maximum gap.  The more detailed computational results (not given in the table) also suggest that the maximum gap is very close to the lower bound when it is not the same as the lower bound. Hence there is a hope for improving the upper bound. We leave it as an open problem. Any progress will require  full understanding on the intricate cancellations   occurring while adding and multiplying  polynomials.
 
\end{itemize}
\end{remark}

\section{Proof}\label{sec:proof}
In this section, we prove the three theorems given in the previous section.
We begin by  listing several short-hand notations that will be used throughout
the proofs without explicit references.
\begin{notation}[Notations used in the proof]
\label{notation:proof} \mbox{}
\begin{eqnarray*}
\varphi(n)       &=& \deg(\Phi_n)\\
\psi(n)       &=& \deg(\Psi_n)\\
\tdeg(f)   & = & \mbox{{\rm the trailing degree of a univariate polynomial \ensuremath{f}}}
\end{eqnarray*}
\end{notation}

\subsection{Proof of Theorem~\ref{theorem:c2p}}

Theorem~\ref{theorem:c2p} follows immediately from Lemma~\ref{lemma:upper} and Lemma~\ref{lemma:lower}.

\begin{lemma} \label{lemma:gap_add}
Let  $A$ and $B$ be polynomials. If there is no cancellation of terms while adding the two polynomials, then $$g(A+B) \;\;\leq\;\; \max\{\; g(A),\;g(B),\;\tdeg(B)-\deg(A),\; \tdeg(A)-\deg(B)\;\}$$
\end{lemma}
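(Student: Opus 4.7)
The plan is to exploit the fact that since no cancellation occurs in forming $A + B$, the exponent set of $A + B$ equals the union $E = E_A \cup E_B$ of the exponent sets of $A$ and $B$. Hence it suffices to show that for any two consecutive elements $e < e'$ of $E$, the gap $e' - e$ is bounded by the maximum on the right-hand side. I would perform a case analysis based on which of $E_A, E_B$ contains each of $e, e'$.

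The monochromatic cases are immediate. If both $e, e' \in E_A$, then since $(e, e') \cap E$ is empty and $E_A \subseteq E$, the interval $(e, e') \cap E_A$ is empty, so $e, e'$ are consecutive in $E_A$ and $e' - e \leq g(A)$. The symmetric case for $E_B$ gives $e' - e \leq g(B)$.

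The interesting bichromatic case is $e \in E_A$ and $e' \in E_B$; the other direction is symmetric and accounts for the $\tdeg(A) - \deg(B)$ term. I would split on whether $E_B$ contains an element $\leq e$. If yes, let $b$ be the largest such; then $b$ and $e'$ are consecutive in $E_B$ (nothing of $E_B$ lies in $(b, e]$ by maximality of $b$, nor in $(e, e')$ by hypothesis on $e, e'$), so $e' - e \leq e' - b \leq g(B)$. Otherwise $e' = \tdeg(B)$, and I would further split on whether $E_A$ contains an element strictly greater than $e$: if so, the smallest such must be $\geq e'$ and is consecutive to $e$ in $E_A$, giving $e' - e \leq g(A)$; if not, then $e = \deg(A)$ and so $e' - e = \tdeg(B) - \deg(A)$ exactly.

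The main obstacle is nothing deep: it is simply keeping track of the nearest neighbors in $E_A$ versus $E_B$ in the two-step split of the bichromatic case, so that every possibility is covered by one of the four terms on the right. Once the relationship between consecutive elements of $E_A \cup E_B$ and consecutive elements of the individual $E_A$ and $E_B$ is cleanly laid out, the bound follows by elementary arithmetic.
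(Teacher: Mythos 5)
Your proof is correct, and it is organized differently from the paper's. The paper argues globally: it splits into three cases according to whether the supports of $A$ and $B$ are disjoint with $B$ entirely above $A$ (Case 1: $\tdeg(B)>\deg(A)$), disjoint with $A$ above $B$ (Case 2), or overlapping (Case 3), and in the overlapping case it simply asserts that no cancellation gives $g(A+B)\leq\max\{g(A),g(B)\}$ without further justification. You instead argue locally, gap by gap: since no cancellation means the exponent set of $A+B$ is exactly $E_A\cup E_B$, you bound each consecutive pair $e<e'$ of the union by casing on which of $E_A$, $E_B$ contains $e$ and $e'$, with the two-step split in the bichromatic case (nearest element of $E_B$ below $e$, then nearest element of $E_A$ above $e$) cleanly producing each of the four terms on the right-hand side. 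What your route buys is rigor and uniformity: it actually proves the overlapping case that the paper leaves as an assertion, and it handles all relative positions of the supports in one argument rather than three. What the paper's route buys is brevity and the sharper observation that in the disjoint cases the inequality is in fact an equality, which is incidental to the lemma but makes the geometric picture (a single inter-block gap of size $\tdeg(B)-\deg(A)$) immediately visible. Both are complete, elementary, and yield the same bound.
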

\begin{proof}
We consider several cases.
\begin{clists}
\citem{Case 1:} $\tdeg(B) > \deg(A)$. The gaps of $A+B$ occurs in $A, B$ and between $A$ and $B$. Thus
\[ g(A+B)  \;\;=\;\; \max\{\; g(A),\;g(B),\;\tdeg(B)-\deg(A)\;\}\]
Since $\tdeg(A)-\deg(B) < 0$, we have
\[  g(A+B) \;\;=\;\; \max\{\; g(A),\;g(B),\;\tdeg(B)-\deg(A),\; \tdeg(A)-\deg(B)\;\}\]

\citem{Case 2:} $\tdeg(A) > \deg(B)$. By switching the role of $A$ and $B$ in Case 1, 
we have
\[  g(A+B) \;\;=\;\; \max\{\; g(A),\;g(B),\;\tdeg(B)-\deg(A),\; \tdeg(A)-\deg(B)\;\}\]

\citem{Case 3:} $\deg(A) \geq \tdeg(B)$ and $\deg(B) \geq \tdeg(A)$. Since there is no cancellation of terms,  we have\[ g(A+B) \;\;\leq\;\; \max\{\; g(A),\;g(B) \;\}\]
Since $\tdeg(B)-\deg(A)\leq 0$ and $\tdeg(A)-\deg(B)\leq 0$, we have
\[g(A+B) \;\;\leq\;\; \max\{\; g(A),\;g(B),\;\tdeg(B)-\deg(A),\; \tdeg(A)-\deg(B)\;\}\]
\end{clists}
\end{proof}

\begin{lemma}\label{lemma:gap_mul}
Let $A$ and $B$ be polynomials.
If all the non-zero coefficients of $A$ have the same sign and all the non-zero coefficients of  $B$ have the same sign, then we have
$$g(AB) \;\;\leq\;\; \min\{u,v\}$$
where
$$ u = \max\{\; g(B), \;g(A)+\tdeg(B)-\deg(B)\;\}$$
$$ v = \max\{\; g(A), \;g(B)+\tdeg(A)-\deg(A)\;\} $$
\end{lemma}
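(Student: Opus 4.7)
The plan is to exploit the sign hypothesis to pass from $AB$ to a sumset of exponents, and then to bound gaps in that sumset by a careful case split. Because every non-zero coefficient of $A$ has the same sign (and likewise for $B$), every product $a_\alpha b_\beta$ in the convolution has the same sign, so no cancellation occurs and the exponent set of $AB$ equals the sumset $E_A+E_B$, where $E_A=\{\alpha_1<\cdots<\alpha_s\}$ and $E_B=\{\beta_1<\cdots<\beta_t\}$ are the exponent sets of $A$ and $B$. Since $u$ is obtained from $v$ by interchanging the roles of $A$ and $B$, it is enough to prove $G\le v=\max\{g(A),\,g(B)+\tdeg(A)-\deg(A)\}$ for every consecutive gap $m<m+G$ of $E_A+E_B$; the swapped argument then yields $G\le u$, and together $G\le\min\{u,v\}$.

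Given such a gap, I would split on whether $m+G$ admits any decomposition $\alpha_i+\beta_j$ with $i\ge 2$. If it does, then $\alpha_{i-1}+\beta_j\in E_A+E_B$ lies strictly below $m+G$, and since no element of $E_A+E_B$ lies in the open interval $(m,m+G)$, this element is at most $m$; consequently $G\le \alpha_i-\alpha_{i-1}\le g(A)\le v$. This is the easy case.

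When every decomposition of $m+G$ is forced to use $i=1$, we have $m+G=\tdeg(A)+\beta_j$ for a unique index $j$, which must be at least $2$ (otherwise $m+G=\tdeg(AB)$ and no exponent lies below it). I would then compare $\deg(A)+\beta_{j-1}\in E_A+E_B$ with $m$. If $\deg(A)+\beta_{j-1}\le m$, a direct computation gives $G\le(m+G)-(\deg(A)+\beta_{j-1})=(\beta_j-\beta_{j-1})+\tdeg(A)-\deg(A)\le g(B)+\tdeg(A)-\deg(A)\le v$, closing the case.

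The main obstacle is the remaining sub-case $\deg(A)+\beta_{j-1}>m+G$; the equality $\deg(A)+\beta_{j-1}=m+G$ is ruled out because it would produce a decomposition of $m+G$ with $i=s\ge 2$, violating the uniqueness at $i=1$. In this sub-case the shifted block $\{\alpha_i+\beta_{j-1}:1\le i\le s\}$ straddles $m+G$: its minimum $\tdeg(A)+\beta_{j-1}$ lies strictly below, and its maximum strictly above. Letting $i_0$ be the largest index with $\alpha_{i_0}+\beta_{j-1}<m+G$, one has $1\le i_0<s$, and the same ``equality forces a forbidden decomposition'' reasoning rules out $\alpha_{i_0+1}+\beta_{j-1}=m+G$, yielding $\alpha_{i_0+1}+\beta_{j-1}>m+G$. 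Since $\alpha_{i_0}+\beta_{j-1}\le m$, the gap satisfies $G<\alpha_{i_0+1}-\alpha_{i_0}\le g(A)\le v$, which finishes the argument. I expect most of the care to go into cleanly verifying the straddling step and the uniqueness-of-decomposition bookkeeping, since this is where the interplay between the two coordinates of the sumset is used most delicately.
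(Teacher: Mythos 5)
Your proposal is correct, and it proves the lemma by a genuinely different route from the paper. The paper writes $A=\sum_{i} a_i x^{e_i}$, sets $C_j=\sum_{i=1}^{j}a_i x^{e_i}B$, and shows by induction on $j$ that $g(C_j)\leq u$, invoking the addition lemma (Lemma~\ref{lemma:gap_add}) at each step to control the gap created when the next shifted copy of $B$ is appended; the bound $v$ then follows by symmetry. You instead use the no-cancellation observation to identify the support of $AB$ with the sumset $E_A+E_B$ and bound each consecutive gap $m<m+G$ directly, splitting on whether the right endpoint $m+G$ admits a decomposition $\alpha_i+\beta_j$ with $i\geq 2$ (giving $G\leq\alpha_i-\alpha_{i-1}\leq g(A)$) or is forced to be $\tdeg(A)+\beta_j$ (giving either $G\leq(\beta_j-\beta_{j-1})+\tdeg(A)-\deg(A)$ or, via the straddling block $\{\alpha_i+\beta_{j-1}\}$, again $G\leq g(A)$). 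All the steps check out: the exclusion of $j=1$, the impossibility of sumset elements in the open interval $(m,m+G)$, and the observation that equality $\alpha_{i_0+1}+\beta_{j-1}=m+G$ would contradict the case hypothesis are each sound (and note that your straddling sub-case can only arise when $s\geq 2$, since $s=1$ forces $\deg(A)+\beta_{j-1}<m+G$ and hence the earlier sub-case). What your approach buys is self-containment and transparency: it does not depend on the addition lemma, and for each gap it exhibits explicitly which difference in $E_A$ or $E_B$ witnesses the bound. What the paper's approach buys is brevity given that Lemma~\ref{lemma:gap_add} is already in hand, since the induction reduces everything to a single already-proved inequality. Either proof would serve.
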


\begin{proof}
 Let $A=\sum_{i=1}^t a_i x^{e_i}$ where $a_i > 0$ and $e_1 < e_2 < \cdots < e_t$. Let
 \[
 C_j  = \sum_{i=1}^j a_i x^{e_i} B
 \]
Note $AB = C_t$.

We claim that $g(C_j) \leq \max\{\; g(B), \;g(A)+\tdeg(B)-\deg(B)\}$ for $j=1,\ldots,t$. We will prove the claim by induction on $j$.
First, the claim is true for $j=1$ since
\[
  g(C_1) \;\;=\;\; g(a_1x^{e_1}B) \;\;=\;\;g(B)\;\;\leq\;\;\max\{\; g(B), \;g(A)+\tdeg(B)-\deg(B)\}
\]
Next assume that the claim is true for $j$. We will show that the claim is true for $j+1$. For this, note
that\[
g(C_{j+1}) = g(C_j + a_{j+1}x^{e_{j+1}}B)
\]
Since all the non-zero coefficients of $A$ have the same sign and all the non-zero coefficients of  $B$ have the same sign, there is no cancellation of terms in the above summation of $C_j$ and $a_{j+1}x^{e_{j+1}}B$.  Thus, from Lemma~\ref{lemma:gap_add}, we have
\[
g(C_{j+1}) \;\;\leq\;\; \max\{g(C_j),\;\;g(a_{j+1}x^{e_{j+1}}B),\;\tdeg(a_{j+1}x^{e_{j+1}}B)-\deg(C_j),\; \tdeg(C_j)-\deg(a_{j+1}x^{e_{j+1}}B)\;\}
\]

Note
\begin{eqnarray*}
g(C_j) &\leq& \max\{\; g(B), \;g(A)+\tdeg(B)-\deg(B)\;\} \\
g(a_{j+1}x^{e_{j+1}}B) &=& g(B) \\
\deg(C_j) &=& e_j +\deg(B) \\
\tdeg(C_j) &=& \tdeg(A) +\tdeg(B) \\
\deg(a_{j+1}x^{e_{j+1}}B) &=& e_{j+1} +\deg(B) \\
\tdeg(a_{j+1}x^{e_{j+1}}B) &=& e_{j+1} + \tdeg(B)
\end{eqnarray*}
Note
\begin{eqnarray*}
\tdeg(a_{j+1}x^{e_{j+1}}B)-\deg(C_j) &=& \left( e_{j+1} + \tdeg(B) \right) - \left(e_j +\deg(B)\right) \\
                                     &\leq&  g(A)+\tdeg(B)-\deg(B)   \\
                                     \\
\tdeg(C_j)-\deg(a_{j+1}x^{e_{j+1}}B) &=& \left(\tdeg(A) +\tdeg(B)\right) - \left(e_{j+1} +\deg(B)\right) \\
&\leq & 0
\end{eqnarray*}
Thus
\begin{eqnarray*}
g(C_{j+1}) & \leq & \max\left\{\max\{\; g(B), \;g(A)+\tdeg(B)-\deg(B)\;\},\;\;g(B),\;g(A)+\tdeg(B)-\deg(B)\;\right\} \\
 & = & \max\{g(B),\;\;g(A)+\tdeg(B)-\deg(B)\;\}
\end{eqnarray*}
Hence, we have proved the claim for $C_1,\ldots,C_t$. Since $AB=C_t$, we have
\[
g(AB) \;\leq\; u=\max\{g(B),\;\;g(A)+\tdeg(B)-\deg(B)\;\}
\]
By switching the role of $A$ and $B,$ we can also prove, in the identical way, that
\[
g(AB) \;\leq\; v=\max\{g(A),\;\;g(B)+\tdeg(A)-\deg(A)\;\}
\]
Hence we have $g(AB) \leq \min\{u,v\}$.
\end{proof}

\begin{lemma}\label{lemma:upper}
Let $p_1 < p_2$ be odd primes. Then we have
\[
g(\Phi_{p_1p_2})  \;\leq\; p_1-1
\]
\end{lemma}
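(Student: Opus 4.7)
The plan is to bound $g(\Phi_{p_1p_2})$ by exploiting two convolution identities that control sums of consecutive coefficients. Writing $\Phi_{p_1p_2}(x)=\sum_m c_m x^m$, multiplication by $\Phi_{p_1}(x)=1+x+\cdots+x^{p_1-1}$ gives $\Phi_{p_1p_2}(x)\,\Phi_{p_1}(x)=\Phi_{p_1}(x^{p_2})$. Reading off the coefficient of $x^m$ yields the constraint $T_m:=\sum_{k=m-p_1+1}^m c_k \in \{0,1\}$, with $T_m=1$ exactly when $m\in\{0,p_2,2p_2,\ldots,(p_1-1)p_2\}$. The analogous identity obtained by multiplying by $\Phi_{p_2}$ gives $U_m:=\sum_{k=m-p_2+1}^m c_k \in \{0,1\}$, with $U_m=1$ exactly when $m\in\{0,p_1,2p_1,\ldots,(p_2-1)p_1\}$. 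These are the two ``window identities'' that will drive everything.

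Next, I argue by contradiction. Suppose two consecutive support exponents $m<m'$ satisfy $m'-m\geq p_1$, so $c_k=0$ for $m<k<m'$. Then the window $[m,m+p_1-1]$ contains exactly one nonzero coefficient, namely $c_m$, so $T_{m+p_1-1}=c_m$; since $T_{m+p_1-1}\in\{0,1\}$ and $c_m\neq 0$, this forces $c_m=+1$ and $m+p_1-1=j_1\, p_2$ for some $j_1\in\{0,\ldots,p_1-1\}$. The same reasoning applied to the window $[m'-p_1+1,m']$, which is disjoint from $\{m\}$ since $m'\geq m+p_1$, gives $c_{m'}=+1$ and $m'=j_2\, p_2$. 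From $m'>m+p_1-1$ we obtain $j_2>j_1$, hence $m'-m=(j_2-j_1)p_2+p_1-1\geq p_1+p_2-1$. This amplification of the gap is the pivotal step.

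With the enlarged gap in hand, the window $[m,m+p_2-1]$ also contains only $c_m$ as a nonzero coefficient, so $U_{m+p_2-1}=c_m=1$, which forces $m+p_2-1=k_0\, p_1$ for some integer $k_0$. Combining $m+p_1-1=j_1\, p_2$ with $m+p_2-1=k_0\, p_1$ yields $(j_1+1)p_2=(k_0+1)p_1$; since $\gcd(p_1,p_2)=1$, we must have $p_1\mid j_1+1$, and the only $j_1\in\{0,\ldots,p_1-1\}$ with this property is $j_1=p_1-1$. This forces $m=(p_1-1)(p_2-1)=\varphi(p_1p_2)=\deg\Phi_{p_1p_2}$, so no support exponent $m'>m$ can exist, contradicting our choice. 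Hence no gap of size $\geq p_1$ can occur, and $g(\Phi_{p_1p_2})\leq p_1-1$.

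The main obstacle, and the reason the proof cannot proceed by a one-shot application of the earlier lemmas, is that a direct use of Lemma~\ref{lemma:gap_add} and Lemma~\ref{lemma:gap_mul} on the sign decomposition $\Phi_{p_1p_2}=P_+-P_-$ only yields the weaker bound $g(\Phi_{p_1p_2})\leq p_1$, because each of $P_+$ and $P_-$ is a product of geometric-like polynomials whose internal gaps are exactly $p_1$. Squeezing out the last unit requires the two-stage argument above: first use $T$ to amplify the hypothetical gap from $\geq p_1$ to $\geq p_1+p_2-1$, then use $U$ to reach the Diophantine identity $(j_1+1)p_2=(k_0+1)p_1$ and exploit coprimality to pin $m$ at the maximum exponent.
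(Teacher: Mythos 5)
Your proof is correct, and it takes a genuinely different route from the paper's. The paper starts from the Lam--Leung/Lenstra explicit form $\Phi_{p_1p_2}=AB+CD$, where $A,B,C,D$ are lacunary geometric sums; it first proves general lemmas bounding the maximum gap of a cancellation-free sum (Lemma~\ref{lemma:gap_add}) and of a product of same-signed polynomials (Lemma~\ref{lemma:gap_mul}), deduces $g(AB)\leq p_1$ and $g(CD)\leq p_1+1$, exploits the fact that $CD$ sits strictly inside $AB$ and shares no exponents with it to get $g(\Phi_{p_1p_2})\leq g(AB)\leq p_1$, and finally excludes the value $p_1$ by showing that a gap of exactly $p_1$ would force a gap of $p_1+2$ in $CD$. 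Your argument instead uses only the convolution identities $\Phi_{p_1p_2}(x)\Phi_{p_1}(x)=\Phi_{p_1}(x^{p_2})$ and $\Phi_{p_1p_2}(x)\Phi_{p_2}(x)=\Phi_{p_2}(x^{p_1})$, which follow directly from $x^{p_1p_2}-1=\prod_{d\mid p_1p_2}\Phi_d(x)$; the resulting sliding-window constraints $T_m,U_m\in\{0,1\}$ are exactly what is needed, and I have checked the key steps: a hypothetical gap of length $\geq p_1$ forces $c_m=c_{m'}=1$ with $m+p_1-1=j_1p_2$ and $m'=j_2p_2$, hence $j_2>j_1$ amplifies the gap to $\geq p_1+p_2-1$; the $p_2$-window then gives $m+p_2-1=k_0p_1$, so $(j_1+1)p_2=(k_0+1)p_1$, coprimality forces $j_1=p_1-1$, and $m=(p_1-1)(p_2-1)=\deg\Phi_{p_1p_2}$ leaves no room for $m'$. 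Your approach is more elementary and self-contained: it needs neither the structure theorem for $\Phi_{p_1p_2}$ nor the two general gap lemmas, and it avoids the two-stage ``first show $\leq p_1$, then exclude $p_1$'' bookkeeping. What the paper's route buys is that the $AB+CD$ decomposition is reused verbatim to establish the matching lower bound (Lemma~\ref{lemma:lower}, via $\Phi_{p_1p_2}=1-x+x^{p_1}+\cdots$), and the gap lemmas for sums and products are general tools in the spirit of the rest of the paper; your window identities would also yield the lower bound with little extra work, so the two proofs are of comparable total cost, but yours localizes the combinatorics in the coefficients rather than in the summands.
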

\begin{proof}
From \cite{Lenstra1978,LamLeung1996, Thangadurai1999,Mor09},
$\Phi_{p_1p_2}$ has the form
\[
\Phi_{p_1p_2}(x)=\sum_{i=0}^{\rho}x^{ip_1}\;\cdot\;\sum_{j=0}^{\sigma}x^{jp_2}\;\;-\;\; x\cdot\sum_{i=0}^{p_2-2-\rho}x^{ip_1}\;\cdot\;\sum_{j=0}^{p_1-2-\sigma}x^{jp_2}\;\]
where $\rho$ and $\sigma$ are the unique integers such that $p_1p_2+1=(\rho+1)p_1+(\sigma+1)p_2$ with $0\leq\rho\leq p_2-2$ and $0\leq\sigma\leq p_1-2$.
It is also known that  accumulation/cancellation of terms does not occur when we expand the above expression for $\Phi_{p_1p_2}(x)$.
It will be more convenient to rewrite the above expression into the following equivalent form
\[
  \Phi_{p_1p_2}(x) =  A \cdot B + C \cdot D
\]
where
\begin{align*}
  A &=   \sum_{i=0}^{\rho}x^{ip_1}        &    B & =  \sum_{j=0}^{\sigma}x^{jp_2}\\
  C &=   \sum_{i=0}^{p_2-2-\rho}x^{ip_1}  &    D & =-x  \sum_{j=0}^{p_1-2-\sigma}x^{jp_2}
\end{align*}
Note that
\[
\begin{array}{lllllllll}
\tdeg(A) &=& 0   &\; &  \deg(A) &=& \rho p_1            &\; &  g(A)=p_1 \\
\tdeg(B) &=& 0   &\; &  \deg(B) &=& \sigma p_2          &\; &  g(B)=p_2  \\
\tdeg(C) &=& 0   &\; &  \deg(C) &=& (p_2-2-\rho)p_1    &\; & g(C)=p_1 \\
\tdeg(D) &=& 1   &\; &  \deg(D) &=& (p_1-2-\sigma)p_2+1  &\; & g(D)=p_2  \\
\end{array}
\]
Thus
\[
\begin{array}{lll}
g(B) + \tdeg(A)- \deg(A)  &=& p_2-\rho p_1 \\
                                 &=& p_2-p_1p_2-1+p_1+(\sigma+1)p_2  \\
                                 &\leq& p_2-p_1p_2-1+p_1+(p_1-1)p_2 \\
                                 &=&p_1-1\\
g(A) + \tdeg(B)- \deg (B)  &=& p_1-\sigma p_2 \\
                                          &\leq&  p_1 \\
g(D) + \tdeg(C)- \deg (C)  &=& p_2-(p_2-2-\rho)p_1 \\
                                 &=& p_2-p_2p_1+2p_1+\rho p_1 \\
                                 &=& p_2-p_2p_1+2p_1+p_1p_2+1-p_1-(\sigma+1)p_2 \\
                                 &=& p_1+1-\sigma p_2  \\
                                 &\leq & p_1+1\\
g(C) + \tdeg(D)- \deg (D)  &=& p_1+1-\left((p_1-2-\sigma)p_2+1\right)\\
                                 &=& p_1-(p_1-2-\sigma)p_2 \\
                                          &\leq&  p_1 \\
\end{array}
\]
By Lemma~\ref{lemma:gap_mul},
we have
\begin{eqnarray}
g(AB) &\leq& \min\{ \max\{p_2,p_1\},\max\{p_1,p_1-1\} \} = \min\{ p_2,p_1 \} = p_1     \label{eq:gap_AB}\\
g(CD)&\leq& \min\{ \max\{p_2,p_1\},\max\{p_1,p_1+1\} \} = \min\{ p_2,p_1 +1\} = p_1+1  \label{eq:gap_CD}
\end{eqnarray}

Here we could  apply Lemma~\ref{lemma:gap_add} to bound $g(AB+CD)$. However, it would  not be helpful since we would get a bound which is at least $p_1+1$. We want  a tighter bound, namely $p_1-1$.
 For this, we exploit the {\em  particular\/} way $AB$ and $CD$ are overlapping.
 We begin by noting
\[
\begin{array}{llllllll}
\tdeg(AB)  &=& 0   & \;\;\;\;\;\; &
\deg(AB)   &=& \rho p_1+\sigma p_2 = \varphi(p_1p_2)\\
\tdeg(CD) &=& 1   & \;\;\;\;\;\; &
\deg(CD)  &=& (p_2-2-\rho)p_1+(p_1-2-\sigma)p_2+1,
\end{array}
\]
Hence
\[
\begin{array}{ccccl}
\tdeg(CD) &-& \tdeg(AB)  &=& 1 \\
\deg(AB)  &-& \deg(CD)   &=&  2(\rho p_1+\sigma p_2-p_1p_2+p_1+p_2)-1\\
                 &  &                    &=& 2(1-p_1-p_2+p_1+p_2)-1=1
\end{array}
\]
So we have the following overlapping between $AB$ and $CD$ and the resulting $AB+CD$:
\begin{center}
\tiny
\psset{unit=0.19}
\begin{pspicture*}(-7,-7)(31,4)
  \psframe[linewidth=1pt,fillstyle=solid,fillcolor=lightgray](-0.5, 0)(30.5, 2)
  \psframe[linewidth=1pt,fillstyle=solid,fillcolor=darkgray] (-0.5, 0)( 0.5, 2)
  \psframe[linewidth=1pt,fillstyle=solid,fillcolor=white]    ( 0.5, 0)( 1.5, 2)
  \psframe[linewidth=1pt,fillstyle=solid,fillcolor=white]    (28.5, 0)(29.5, 2)
  \psframe[linewidth=1pt,fillstyle=solid,fillcolor=darkgray] (29.5, 0)(30.5, 2)
  \psframe[linewidth=1pt,fillstyle=solid,fillcolor=lightgray]( 0.5,-3)(29.5,-1)
  \psframe[linewidth=1pt,fillstyle=solid,fillcolor=darkgray] ( 0.5,-3)( 1.5,-1)
  \psframe[linewidth=1pt,fillstyle=solid,fillcolor=darkgray] (28.5,-3)(29.5,-1)
  \psframe[linewidth=1pt,fillstyle=solid,fillcolor=lightgray](-0.5,-6)(30.5,-4)
  \psframe[linewidth=1pt,fillstyle=solid,fillcolor=darkgray] (-0.5,-6)( 0.5,-4)
  \psframe[linewidth=1pt,fillstyle=solid,fillcolor=darkgray] ( 0.5,-6)( 1.5,-4)
  \psframe[linewidth=1pt,fillstyle=solid,fillcolor=darkgray] (28.5,-6)(29.5,-4)
  \psframe[linewidth=1pt,fillstyle=solid,fillcolor=darkgray] (29.5,-6)(30.5,-4)
  \rput(-2, 1){$AB$}
  \rput(-2,-2){$CD$} 
  \rput(-4,-5){$AB+CD$} 
  \rput(0,3){$0$}
  \rput(30,3){$d$}
\end{pspicture*}
\end{center}    
where each exponent is colored in black, white and gray to indicate that
the exponent occurs, does not occur, and may or may not occur, respectively.
The letter $d$ is the shorthand for the degree of the polynomial~$AB$.
Note that the exponents $1$ and $d-1$ in $AB$ are colored in white because
$AB$ and $CD$ do not share any exponents.
As the result, the exponents $0,1,d-1,d$ occur in $AB+CD$, and are colored in black. 

Due to the way the polynomials $AB$ and $CD$ are overlapped, while adding $CD$ to $AB$,  none of  the terms of $
CD$ can ever increase the gaps already in $AB$.
Hence $$g(AB+CD) \;\leq\;\; g(AB)$$
Thus from Formula~(\ref{eq:gap_AB}) we have
\[
    g(\Phi_{p_1p_2}) \;\;=\;\; g(AB+CD) \;\leq \;p_1
\]

Hence in order to prove the first claim: $g(\Phi_{p_1p_2}) \leq p_1-1$, 
it only remains to show that $g(\Phi_{p_1p_2})\neq p_1$.
We will do so by contradiction. 
Suppose that $g(\Phi_{p_1p_2}) = p_1$. 
Then there must occur two exponents, 
say $\alpha$ and $\beta$, in the polynomial~$AB+CD$ such that $\beta-\alpha = p_1$
and all the exponents in between them do not occur in $AB+CD$. 
Note that $\alpha\geq$1 and $\beta\leq d-1$. Then we are  in the   situation
described by the colorings in the following diagram\begin{center}
\tiny
\psset{unit=0.19}
\begin{pspicture*}(-7,-7)(31,4)
  \psframe[linewidth=1pt,fillstyle=solid,fillcolor=lightgray](-0.5, 0)(30.5, 2)
  \psframe[linewidth=1pt,fillstyle=solid,fillcolor=darkgray] (-0.5, 0)( 0.5, 2)
  \psframe[linewidth=1pt,fillstyle=solid,fillcolor=white]    ( 0.5, 0)( 1.5, 2)
  \psframe[linewidth=1pt,fillstyle=solid,fillcolor=white]    (28.5, 0)(29.5, 2)
  \psframe[linewidth=1pt,fillstyle=solid,fillcolor=darkgray] (29.5, 0)(30.5, 2)
  \psframe[linewidth=1pt,fillstyle=solid,fillcolor=lightgray]( 0.5,-3)(29.5,-1)
  \psframe[linewidth=1pt,fillstyle=solid,fillcolor=darkgray] ( 0.5,-3)( 1.5,-1)
  \psframe[linewidth=1pt,fillstyle=solid,fillcolor=darkgray] (28.5,-3)(29.5,-1)
  \psframe[linewidth=1pt,fillstyle=solid,fillcolor=lightgray](-0.5,-6)(30.5,-4)
  \psframe[linewidth=1pt,fillstyle=solid,fillcolor=darkgray] (-0.5,-6)( 0.5,-4)
  \psframe[linewidth=1pt,fillstyle=solid,fillcolor=darkgray] ( 0.5,-6)( 1.5,-4)
  \psframe[linewidth=1pt,fillstyle=solid,fillcolor=darkgray] (28.5,-6)(29.5,-4)
  \psframe[linewidth=1pt,fillstyle=solid,fillcolor=darkgray] (29.5,-6)(30.5,-4)
  \rput(-2, 1){$AB$}
  \rput(-2,-2){$CD$} 
  \rput(-4,-5){$AB+CD$} 
  \rput(0,3){$0$}
  \rput(30,3){$d$}

  \psframe[linewidth=1pt,fillstyle=solid,fillcolor=darkgray]( 9.5, 0)(10.5, 2)
  \psframe[linewidth=1pt,fillstyle=solid,fillcolor=white]   (10.5, 0)(16.5, 2)
  \psframe[linewidth=1pt,fillstyle=solid,fillcolor=darkgray](16.5, 0)(17.5, 2)
  \psframe[linewidth=1pt,fillstyle=solid,fillcolor=white]   ( 9.5,-3)(10.5,-1)
  \psframe[linewidth=1pt,fillstyle=solid,fillcolor=white]   (10.5,-3)(16.5,-1)
  \psframe[linewidth=1pt,fillstyle=solid,fillcolor=white]   (16.5,-3)(17.5,-1)
  \psframe[linewidth=1pt,fillstyle=solid,fillcolor=darkgray]( 9.5,-6)(10.5,-4)
  \psframe[linewidth=1pt,fillstyle=solid,fillcolor=darkgray](16.5,-6)(17.5,-4)
  \psframe[linewidth=1pt,fillstyle=solid,fillcolor=white]   (10.5,-6)(16.5,-4)
  \rput(10,3){$\alpha$}
  \rput(17,3){$\beta$} 
\end{pspicture*}
\end{center}
In the above diagram, the exponents $\alpha$ and $\beta$ in the polynomial $AB+CD$ are colored in black
because they occur in  $AB+CD$ and all the exponents in between them are colored in white
because they do not occur in $AB+CD$.
Since there is no cancellation of terms while summing $AB$ and $CD$,
all the exponents in between $\alpha$ and $\beta$
in $AB$ and $CD$ cannot occur either, hence colored in white also.
Now from  Formula~(\ref{eq:gap_AB}), we have  $g(AB)\leq p_1$. Since $\beta-\alpha = p_1$, 
the exponents $\alpha$ and $\beta$ must occur in $AB$, hence colored in black.
Since $AB$ and $CD$ do not share any exponents,
the exponents $\alpha$ and $\beta$ must {\em not\/} occur in $CD$, hence colored in white.
Thus we have justified all the colorings in the above diagram.  

Now we are ready to derive a contradiction. 
From the diagram, we see that 
\[
g(CD) \geq (\beta+1)-(\alpha-1) = \beta-\alpha+2 = p_1+2
\]
But from Formula~(\ref{eq:gap_CD}), we have 
\[
 g(CD) \leq p_1 + 1
\]          
This is a contradiction.
Hence $g(\Phi_{p_1p_2})\neq p_1$. Thus we finally have   
\[
g(\Phi_{p_1p_2}) \;\leq\; p_1-1
\]
\end{proof} 
 
\begin{lemma}\label{lemma:lower}
Let $p_1 < p_2$ be odd primes. Then we have
\[
g(\Phi_{p_1p_2}) \;\geq\; p_1-1
\]
\end{lemma}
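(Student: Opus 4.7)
The plan is to exhibit an explicit pair of consecutive exponents in $\Phi_{p_1p_2}$ whose difference equals $p_1-1$, located near the low-degree end of the polynomial. I would reuse the decomposition $\Phi_{p_1p_2} = AB + CD$ from the proof of Lemma~\ref{lemma:upper}, together with the fact (also used there) that no cancellation occurs between $AB$ and $CD$; consequently the exponent set of $\Phi_{p_1p_2}$ is exactly the union of the exponent sets of $AB$ and $CD$.

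Before reading off the exponents, I would verify that $\rho \geq 1$ and $\sigma \geq 1$ from the defining identity $(\rho+1)p_1 + (\sigma+1)p_2 = p_1 p_2 + 1$: if $\rho = 0$, reducing modulo $p_2$ gives $0 \equiv 1 \pmod{p_2}$, a contradiction; the case $\sigma = 0$ is symmetric modulo $p_1$.

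Next I would read off the relevant exponents directly from the shapes of $AB$ and $CD$. The exponents of $AB$ are all of the form $ip_1 + jp_2$ with $0 \leq i \leq \rho$ and $0 \leq j \leq \sigma$; the exponents of $CD$ are of the form $1 + ip_1 + jp_2$ with $0 \leq i \leq p_2 - 2 - \rho$ and $0 \leq j \leq p_1 - 2 - \sigma$. From this I would extract three facts:
\begin{itemize}
\item The exponent $1$ occurs in $CD$ (take $i=j=0$), so it occurs in $\Phi_{p_1p_2}$.
\item The exponent $p_1$ occurs in $AB$ (take $i=1$, $j=0$, permitted since $\rho \geq 1$), and does not occur in $CD$ because $1 + ip_1 + jp_2 = p_1$ has no solution in non-negative integers; hence it occurs in $\Phi_{p_1p_2}$.
\item For every integer $e$ with $2 \leq e \leq p_1 - 1$: every nonzero exponent of $AB$ is at least $\min(p_1,p_2) = p_1 > e$, and every exponent of $CD$ is either $1$ or at least $1 + p_1 > e$, so $e$ does \emph{not} occur in $\Phi_{p_1p_2}$.
\end{itemize}

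Combining these observations, $1$ and $p_1$ are consecutive exponents of $\Phi_{p_1p_2}$ in the sense of Definition~\ref{def:maxgap}, and their difference is $p_1 - 1$, giving $g(\Phi_{p_1p_2}) \geq p_1 - 1$. I do not anticipate a real obstacle: the argument is pure bookkeeping on the explicit formula for $\Phi_{p_1p_2}$, and the only subtlety is confirming $\rho, \sigma \geq 1$ so that $p_1$ is actually realized as an exponent in $AB$.
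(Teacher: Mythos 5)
Your proof is correct and follows essentially the same route as the paper: both rely on the $AB+CD$ decomposition with no cancellation, establish $\rho\geq 1$ by the same modular argument, and exhibit the gap between the exponents $1$ and $p_1$ (the paper phrases this as $\Phi_{p_1p_2}(x)=1-x+x^{p_1}+\cdots$). Two blemishes, neither fatal: reducing $p_1+(\sigma+1)p_2=p_1p_2+1$ modulo $p_2$ gives $p_1\equiv 1\pmod{p_2}$, not $0\equiv 1$, and the contradiction comes from $1<p_1<p_2$. More seriously, your side claim that $\sigma\geq 1$ is simply false --- the ``symmetric'' reduction only yields $p_2\equiv 1\pmod{p_1}$, which is entirely possible (e.g.\ $p_1=3$, $p_2=7$ gives $\rho=4$, $\sigma=0$) --- but since you never actually use $\sigma\geq 1$ in the rest of the argument, the proof stands once that claim is deleted.
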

\begin{proof}
We will show this by finding a gap of size $p_1-1$. We begin by recalling
\begin{eqnarray*}
AB &=& \sum_{i=0}^{\rho}x^{ip_1}\;\cdot\;\sum_{j=0}^{\sigma}x^{jp_2}  \\
CD &=& -\;\; x\cdot\sum_{i=0}^{p_2-2-\rho}x^{ip_1}\;\cdot\;\sum_{j=0}^{p_1-2-\sigma}x^{jp_2}
\end{eqnarray*}
where $\rho$ and $\sigma$ are the unique integers such that 
\[
p_1p_2+1=(\rho+1)p_1+(\sigma+1)p_2
\] 
with $0\leq\rho\leq p_2-2$ and $0\leq\sigma\leq p_1-2$.

We claim that $\rho \geq 1$. Suppose otherwise. Then $\rho=0$ and thus we have $$p_1p_2+1=p_1+(\sigma+1)p_2$$ 
Taking both sides modulo $p_2$, we see  $1 \equiv p_1 \pmod{p_2}$.  
This contradicts the fact $1< p_1 < p_2$.
Hence $\rho \geq 1$. 

Thus the polynomial $AB$ must have the following form: 
\[
AB = 1 + x^{p_1} +  \mbox{terms of degree higher than $p_1$ if there is any }
\]
On the other hand, the polynomial $CD$ must have the following form:
\[
CD = -x -  \mbox{terms of degree higher than $p_1$ if there is any}
\]
Thus the polynomial $AB + CD$ must have the following form:
\begin{equation}\label{eq:phi_p1p2_initial_gap}
\Phi_{p_1p_2}(x)=AB+CD = 1 -x+x^{p_1} +  \mbox{terms of degree higher than $p_1$}
\end{equation}
Thus there is a gap of size $p_1-1$ between $x$ and $x^{p_1}$.  Hence we finally have
\[
g(\Phi_{p_1p_2}) \;\geq\; p_1-1
\]
\end{proof}

\subsection{Proof of Theorem~\ref{theorem:ic3p}}

Theorem~\ref{theorem:ic3p} follows immediately from Lemma~\ref{lemma:D1_op_op_op_exact}, Lemma~\ref{lemma:D2_op_op_op_exact} and Lemma~\ref{lemma:D1D2_C}.

\begin{lemma}\label{lemma:D1_op_op_op_exact}
Let $n=p_1p_2p_3$ where $p_1<p_2<p_3$ are odd primes satisfying
\[
\begin{array}{lrcl}
\s{D1}:&  2n\;\frac{1}{p_1}&>&\frac{4}{3}\deg(\Psi_n)
\end{array}
\]
Then we have
\[g(\Psi_n)\;=\;2n\;\frac{1}{p_1}-\deg(\Psi_n)\]
\end{lemma}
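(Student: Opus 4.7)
The plan is to prove both directions separately: a lower bound $g(\Psi_n) \geq 2n/p_1 - \psi(n)$ holding unconditionally (by exhibiting a witnessing gap), and a matching upper bound that uses D1 crucially. A useful reformulation of D1 is that it is equivalent to $G := 2n/p_1 - \psi(n) > \psi(n)/3$, so the target gap is more than one-third of the whole degree; this size will do the heavy lifting in ruling out larger gaps.

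The main structural input is the factorization
\[
\Psi_n \;=\; (x-1)\,\Phi_{p_1}\,\Phi_{p_2}\,\Phi_{p_3}\,\Phi_{p_1p_2}\,\Phi_{p_1p_3}\,\Phi_{p_2p_3},
\]
which comes from $\Psi_n \Phi_n = x^n - 1$ together with $\prod_{d \mid n} \Phi_d = x^n - 1$. Each factor has a simple gap profile: $\Phi_{p_i}$ has all consecutive exponents, and each $\Phi_{p_ip_j}$ has maximum gap $p_i-1$ by Theorem~\ref{theorem:c2p}. For the lower bound, the reflection symmetry of $\Psi_n$ (roots pair as $\zeta$ and $\zeta^{-1}$, yielding $\Psi_n(x) = \pm x^{\psi(n)}\Psi_n(1/x)$) reduces the task to showing that the exponent $n/p_1$ appears in $\Psi_n$ while no exponent in the open interval $(\psi(n)-n/p_1,\;n/p_1)$ does. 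One would pin this down by computing $\Psi_n \bmod x^{n/p_1+1}$ from $\Psi_n = (x^n-1)/\Phi_n$, or equivalently via a careful truncated expansion of the factorization above, using the fact that for $k < n/p_1$ the coefficients of $\Psi_n$ agree with those of $-1/\Phi_n$.

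For the upper bound, I would apply Lemmas~\ref{lemma:gap_add} and~\ref{lemma:gap_mul} iteratively to the factorization, grouping factors so that the trailing- and leading-degree corrections in Lemma~\ref{lemma:gap_mul} stay controlled. The punchline uses D1: a gap strictly larger than $G$ would carve out an empty interval covering more than one-third of $[0,\psi(n)]$, which, combined with the reflection symmetry and the fact that both endpoints $0$ and $\psi(n)$ carry nonzero coefficients, is incompatible with the exponent patterns produced by multiplying the $\Phi_d$ factors (whose individual gaps are all much smaller than $G$ under D1).

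The main obstacle will be handling the accumulation of exponents when multiplying the three ``middle'' factors $\Phi_{p_1p_2}\,\Phi_{p_1p_3}\,\Phi_{p_2p_3}$, whose combined support can in principle spread densely across much of $[0,\psi(n)]$. Showing that a clean empty window of length exactly $G$ survives in the middle requires ruling out spurious cancellations that could accidentally enlarge the middle gap, as well as spurious accumulations that could fill it in. Condition D1 supplies precisely the slack needed for this bookkeeping to close: the size budget $G > \psi(n)/3$ forces the ``low'' and ``high'' clusters to be short enough that the available exponents from the $\Phi_d$ factors cannot reach across the middle, while still being wide enough to realize all internal gaps as consequences of the individual factors' gap structures.
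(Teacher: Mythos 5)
There is a genuine gap here: your upper bound is never actually proved, and the machinery you propose for it would not apply. Lemma~\ref{lemma:gap_mul} requires every factor to have all nonzero coefficients of one sign, but in your factorization $\Psi_n=(x-1)\Phi_{p_1}\Phi_{p_2}\Phi_{p_3}\Phi_{p_1p_2}\Phi_{p_1p_3}\Phi_{p_2p_3}$ the factors $x-1$ and each $\Phi_{p_ip_j}$ have mixed signs, so the iterated application of Lemmas~\ref{lemma:gap_add} and~\ref{lemma:gap_mul} is blocked at the first step, and no grouping of factors repairs this. The closing claim --- that a gap larger than $G$ is ``incompatible with the exponent patterns produced by multiplying the $\Phi_d$ factors'' --- is exactly the hard cancellation/accumulation problem; you correctly flag it as the main obstacle but then assert rather than show that D1 closes it. Size alone gives no contradiction: an empty window longer than $\psi(n)/3$ fits comfortably inside $[0,\psi(n)]$.

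The missing idea is a different factorization that makes the block structure visible: $\Psi_n=\Phi_{p_1p_2}(x)\,\Phi_{p_1}(x^{p_3})\,(x^{p_2p_3}-1)=-A_0+A_1$ with $A_0=\Phi_{p_1p_2}(x)\Phi_{p_1}(x^{p_3})$ and $A_1=x^{p_2p_3}A_0$. Here $\deg(A_0)=\psi(n)-n/p_1$ and $\tdeg(A_1)=n/p_1$, so the gap between the two blocks is exactly $\lambda=2n/p_1-\psi(n)$, and D1 is precisely the statement $\lambda>\deg(A_0)$ (equivalent to your $G>\psi(n)/3$, but in the form that matters). Since every internal gap of $A_0$ (and of its shift $A_1$) is trivially at most $\deg(A_0)-\tdeg(A_0)\le\deg(A_0)<\lambda$, one gets $g(\Psi_n)=\max\{g(A_0),\lambda,g(A_1)\}=\lambda$ with no analysis of cancellations at all; your separate lower-bound computation via $\Psi_n\bmod x^{n/p_1+1}$ and the reflection symmetry also becomes unnecessary. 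You identified the right reformulation of D1 but not the decomposition that lets it do its work.
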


\begin{proof}
By Lemma 2 in~\cite{Mor09} we have
\begin{eqnarray*}
\Psi_{p_1p_2p_3}(x) &=& \Phi_{p_1p_2}(x)\cdot\Psi_{p_1p_2}(x^{p_3}) \\
              &=& \Phi_{p_1p_2}(x)\cdot\Phi_{p_1}(x^{p_3})\cdot\Psi_{p_1}(x^{p_2p_3}) \\
              &=& \Phi_{p_1p_2}(x)\cdot\Phi_{p_1}(x^{p_3})\cdot(-1+x^{p_2p_3})   
\end{eqnarray*}
We expand the above expression and name the parts as follows.
\[
\underbrack{-\underbrack{\Phi_{p_1p_2}(x)\cdot\Phi_{p_1}(x^{p_3})         }_{A_0}
           +\underbrack{x^{p_2p_3}\Phi_{p_1p_2}(x)\cdot\Phi_{p_1}(x^{p_3})}_{A_1}}_{\Psi_{p_1p_2p_3}(x)}
\]
Let $\lambda$ be the gap, if exists, between $A_0$ and $A_1$, that is,  $\tdeg(A_1) -\deg(A_0)$. 
Note
\begin{eqnarray*}
\deg(A_0)  &=& \deg(\Psi_n)- p_2p_3 = \deg(\Psi_n)- n\;\frac{1}{p_1} \\
\tdeg(A_1) &=& p_2p_3  = n\;\frac{1}{p_1}   
\end{eqnarray*}
Thus
\[
\lambda \;=\;  n\;\frac{1}{p_1}- (\deg(\Psi_n)- n\;\frac{1}{p_1}) 
        \;=\;  2n\frac{1}{p_1} - \deg(\Psi_n)
\]
Note that
\begin{eqnarray*}
\lambda  &=& 2n\frac{1}{p_1} - \deg(\Psi_n) \\
                    &=& 3n\frac{1}{p_1} - 2\deg(\Psi_n)+\deg(A_0) \\
                    &=& \frac{3}{2}\left(2n\frac{1}{p_1} - \frac{4}{3}\deg(\Psi_n)\right)+\deg(A_0) \\
                    &>& \deg(A_0) \\
                    &\geq& g(A_0)=g(A_1)
\end{eqnarray*}
Thus $\lambda >0$ and the gap between $A_0$ and $A_1$ exists. 
Hence \[
g(\Psi_n) =\max\{\;g(A_0),\;\lambda, \;g(A_1)\;\}
=\lambda=\;2n\;\frac{1}{p_1}-\deg(\Psi_n)
\]

\end{proof}
\begin{lemma}\label{lemma:D2_op_op_op_exact}
Let $n=p_1p_2p_3$ where $p_1<p_2<p_3$ are odd primes satisfying:
\begin{eqnarray*}
    \s{D2}: & 2p_3\;\;& > \;\;\; p_2(p_1-1)
\end{eqnarray*}
Then we have
\[g(\Psi_n)\;=2n\;\frac{1}{p_1} -\deg(\Psi_n)\]
\end{lemma}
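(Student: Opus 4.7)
The plan is to follow the template of Lemma~\ref{lemma:D1_op_op_op_exact} closely: use the same factorization
\[
\Psi_n(x) \;=\; -A_0(x) + A_1(x), \qquad A_0 \;=\; \Phi_{p_1p_2}(x)\cdot\Phi_{p_1}(x^{p_3}), \qquad A_1 \;=\; x^{p_2p_3}A_0,
\]
and set $\lambda = \tdeg(A_1) - \deg(A_0)$, which equals $2n/p_1 - \deg(\Psi_n)$ by the same algebra as in that lemma. Since $A_1$ is a pure shift of $A_0$, $g(A_1) = g(A_0)$, so it suffices to establish (i) $\lambda > 0$, so that $-A_0$ and $A_1$ are genuinely separated by a gap of size $\lambda$, and (ii) $g(A_0) \leq \lambda$, so that no internal gap rivals $\lambda$. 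Together these give $g(\Psi_n) = \max\{g(A_0),\lambda,g(A_1)\} = \lambda$.

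For step (i) I would expand $\lambda = p_3(p_2 - p_1 + 1) - (p_1-1)(p_2-1)$. Since $p_1 < p_2$ are distinct odd primes, $p_2 \geq p_1 + 2$, so the first term is at least $3p_3$; D2 rewritten as $(p_1-1)(p_2-1) = p_2(p_1-1) - (p_1-1) \leq 2p_3 - p_1$ then gives
\[
\lambda \;\geq\; 3p_3 - (2p_3 - p_1) \;=\; p_3 + p_1 \;>\; 0.
\]

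Step (ii) is the heart of the argument. The crucial structural consequence of D2 is
\[
\deg(\Phi_{p_1p_2}) \;=\; (p_1-1)(p_2-1) \;<\; p_2(p_1-1) \;<\; 2p_3.
\]
Writing $A_0 = \sum_{k=0}^{p_1-1} T_k$ with $T_k = x^{kp_3}\Phi_{p_1p_2}(x)$, this guarantees that the supports of $T_k$ and $T_{k+2}$ are disjoint, so at any exponent of $A_0$ at most two consecutive shifts contribute. The gaps of $A_0$ then fall into two families: those lying inside a region where a single $T_k$ is active, bounded by $g(\Phi_{p_1p_2}) = p_1 - 1$ via Theorem~\ref{theorem:c2p}; and those crossing an overlap region $[(k+1)p_3,\, kp_3+d]$ between $T_k$ and $T_{k+1}$, where the coefficient at exponent $e$ equals $\phi_{e-kp_3} + \phi_{e-(k+1)p_3}$ (with $\phi_i$ the $i$-th coefficient of $\Phi_{p_1p_2}$) and cancellations can create holes. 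To control the latter I would use the decomposition $\Phi_{p_1p_2} = AB + CD$ from the proof of Lemma~\ref{lemma:upper}, so that $A_0 = AB\cdot\Phi_{p_1}(x^{p_3}) + CD\cdot\Phi_{p_1}(x^{p_3})$ splits as the sum of two sign-homogeneous polynomials; Lemma~\ref{lemma:gap_mul} controls the gap of each summand, while the disjointness of the supports of $AB$ and $CD$ together with $d < 2p_3$ controls the cancellation pattern in the sum. This should yield a bound of the form $g(A_0) \leq p_3 + p_1 - 1$, which lies strictly below the lower bound $\lambda \geq p_3 + p_1$ from step (i).

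The main obstacle is step (ii): polynomial addition can freely merge, destroy, or create gaps through cancellation, and cancellations genuinely do occur in the overlap regions of $A_0$. Ruling out the possibility that they conspire to produce a gap exceeding $\lambda$ requires combinatorial information about $\Phi_{p_1p_2}$ that is finer than the generic gap lemmas --- in particular, the explicit support descriptions of $AB$ and $CD$ recorded in the proof of Lemma~\ref{lemma:upper}, used in tandem with the ``pair-only overlap'' consequence of D2. Once the bound $g(A_0) \leq \lambda$ is in hand, the conclusion $g(\Psi_n) = \lambda$ follows exactly as in Lemma~\ref{lemma:D1_op_op_op_exact}.
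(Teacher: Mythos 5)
Your setup---the factorization $\Psi_n = -A_0 + x^{p_2p_3}A_0$ with $A_0=\Phi_{p_1p_2}(x)\Phi_{p_1}(x^{p_3})$, the identification $\lambda = 2n\frac{1}{p_1}-\deg(\Psi_n)=\tdeg(A_1)-\deg(A_0)$, the reduction to proving $\lambda>0$ and $g(A_0)\le\lambda$, and the estimate $\lambda\ge p_3+p_1$ from \s{D2}---all match the paper, and your step (i) is complete and correct. But step (ii), which you yourself flag as the main obstacle, is a plan rather than a proof, and the specific plan you sketch does not close as stated. Writing $A_0 = AB\cdot\Phi_{p_1}(x^{p_3}) + CD\cdot\Phi_{p_1}(x^{p_3})$ does give two sign-homogeneous summands whose individual gaps Lemma~\ref{lemma:gap_mul} bounds by $p_3$, but the supports of these two products are no longer disjoint (an exponent can equal $a+kp_3$ with $a$ in the support of $AB$ and simultaneously $c+k'p_3$ with $c$ in the support of $CD$), so Lemma~\ref{lemma:gap_add} does not apply to their sum and the cancellation you are trying to avoid must still be confronted head on. Cancellation genuinely occurs and genuinely creates gaps larger than $p_1-1$: for $n=3\cdot5\cdot7$ the exponents $6,7,8,9$ all vanish from $A_0$ even though $g(\Phi_{15})=2$.

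The paper's resolution of step (ii) sidesteps cancellation entirely. In the overlapping case $p_3\le\varphi(p_1p_2)<2p_3-(p_1-1)$ (the right-hand inequality being exactly a restatement of \s{D2}), let $\alpha$ be the largest exponent of $\Phi_{p_1p_2}$ below $p_3$; then $\alpha\ge p_3-(p_1-1)$, and each shifted exponent $\alpha+kp_3$ lies in the support of the single block $B_k=x^{kp_3}\Phi_{p_1p_2}(x)$ and of no neighboring block, hence survives in $A_0$ with coefficient $\pm1$ no matter what cancels elsewhere. These anchor exponents cut $[0,\deg(A_0)]$ into end pieces whose gaps are at most $g(\Phi_{p_1p_2})=p_1-1$ and interior pieces of width exactly $p_3$; every gap of $A_0$ is confined to one piece, so $g(A_0)\le p_3<\lambda$ with no analysis of the overlap coefficients at all. (The easy non-overlapping case $\varphi(p_1p_2)<p_3$ is handled separately.) If you pursue your route you will need an analogous ``surviving exponent'' argument anyway, so I would adopt the anchor decomposition directly; note also that you only invoked $\varphi(p_1p_2)<2p_3$, whereas the argument needs the full strength $\varphi(p_1p_2)<2p_3-(p_1-1)$ to guarantee that $\alpha+p_3$ lands strictly above $\deg(B_0)$.
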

\begin{proof}  
By Lemma 2 in~\cite{Mor09} we have
\begin{eqnarray*}
\Psi_{p_1p_2p_3}(x) &=& \Phi_{p_1p_2}(x)\cdot\Psi_{p_1p_2}(x^{p_3}) \\
              &=& \Phi_{p_1p_2}(x)\cdot\Phi_{p_1}(x^{p_3})\cdot\Psi_{p_1}(x^{p_2p_3}) \\
              &=& \Phi_{p_1p_2}(x)\cdot(1+x^{p_3}+\ldots+x^{(p_1-1)p_3})\cdot(-1+x^{p_2p_3})   
\end{eqnarray*}
We expand the above expression and name the parts as follows.
\[
\underbrack{-\underbrack{         (\underbrack{\Phi_{p_1p_2}(x)}_{B_0}+\ldots+\underbrack{x^{(p_1-1)p_3}\Phi_{p_1p_2}(x)}_{B_{p_1-1}})}_{A_0}
           +\underbrack{x^{p_2p_3}(\underbrack{\Phi_{p_1p_2}(x)}_{B_0}+\ldots+\underbrack{x^{(p_1-1)p_3}\Phi_{p_1p_2}(x)}_{B_{p_1-1}})}_{A_1}}_{\Psi_{p_1p_2p_3}(x)}
\]
Let 
\[
\lambda = \tdeg(A_1) - \deg(A_0) = 2n\frac{1}{p_1}-\psi(n)
\]
From $\s{D2},$ we have 
\begin{eqnarray}
\lambda&=&2n\;\frac{1}{p_1}-\psi(n) \nonumber\\
       &=&2p_2p_3-p_1p_2p_3+(p_1-1)(p_2-1)(p_3-1) \nonumber\\
       &=& p_2p_3-p_1p_3+p_3+p_1-1-p_2(p_1-1) \nonumber\\
       &>& (p_2-p_1)p_3 + p_3+p_1-1 - 2p_3  \nonumber\\
       &=& (p_2-p_1-2)p_3 + p_3 + p_1-1 \label{eq:lambda}
\end{eqnarray}
Thus $\lambda > 0$, i.e. there is no overlap between $A_0$ and $A_1.$ Note that $g(A_0)= g(A_1)$. Thus 
\[
g(\Psi_n)=\max\{\lambda, \;\; g(A_0)\}
\]
We claim that 
$\lambda \;>\; g(A_0).$
Note 
\[
   \s{D2} \;\; \Longleftrightarrow \;\; \varphi(p_1p_2)  < 2p_3-(p_1-1)
\]
We will split the proof into the  following two cases:
\begin{clists}
\citem{Case 1:} $\varphi(p_1p_2) < p_3$.

Note that $\deg(B_0) = \varphi(p_1p_2)$ and $\tdeg(B_1) = p_3$. Hence there is no overlap 
between $B_0$ and~$B_1$.  Likewise there is no overlap between $B_i$ and~$B_{i+1}$ for all $i=1,\ldots, p_1-2$.
Note
\[
g(B_0)\;=\;g(B_1)\;=\;\ldots\;=\;g(B_{p_1-1})\;=\;g(\Phi_{p_1p_2})\;=\;p_1-1
\]
from Theorem~\ref{theorem:c2p}.
Hence 
\[
g(A_0) = \max\{\; p_3 -\varphi(p_1p_2), \;p_1-1\;\}
\]
 From  Eq.~(\ref{eq:lambda}), we have
\begin{align*}
\lambda & > (p_2-p_1-2)p_3 + p_3 + p_1-1 \;>\;p_3\;>\;p_3-\varphi(p_1p_2)\\
\lambda & > (p_2-p_1-2)p_{3} + p_3 + p_1-1 \;>\; p_1 - 1
\end{align*}
Thus we have proved that $\lambda \;>\; g(A_0)$ when $p_3>\varphi(p_1p_2)$.

\citem{Case 2:} $p_3\leq\varphi(p_1p_2)< 2p_3-(p_1-1)$.\\
Note 
\[
\tdeg(B_2) - \deg(B_0) \;=\; 2p_3 - \varphi(p_1p_2) \; >\; 0
\]
Thus $B_0, B_1, \ldots, B_{p_1-1}$ overlap as the following diagram shows.
\begin{center}
\tiny
\psset{unit=0.19}
\begin{pspicture*}(-3,-9)(75,4)
  \def\poly{
    \psframe[linewidth=1pt,fillstyle=solid,fillcolor=lightgray](-0.5,1)(15,2)
    \psframe[linewidth=1pt,fillstyle=solid,fillcolor=darkgray](-0.5,1)( 0,2)
    \psframe[linewidth=1pt,fillstyle=solid,fillcolor=darkgray](15,1)(15.5,2)
  }
  \multips(0,0)(11,-2){3}{\poly}
  \multips(58,-8)(10,-2){1}{\poly}  
  \rput(-2,1.5){$B_0$}
  \rput(9,-0.5){$B_1$}
  \rput(19,-2.5){$B_2$}
  \rput(55,-6.5){$B_{p_1-1}$}
  \rput( 0,3){$0$}
  \rput(15,3){$\varphi(p_1p_2)$}
  \rput(11,-2){$p_3$}
  \rput(21,-4){$2p_3$}
  \rput(59,-8){$(p_1-1)p_3$}
  \rput(40,-5){\textbf{$\ldots$}}
  \rput(50,-6){\textbf{$\ldots$}}
\end{pspicture*}
\end{center}
In the above diagram, the tail exponent and the leading exponent of $B_0$
are colored in black to indicate that they actually occur in $B_0$. 
The other exponents are colored in gray to indicate that they may or may not
occur. The same is done for  $B_2,\ldots,B_{p_1-1}$ since they have the same sparsity structure (shifting does not change the sparsity structure).
In $B_0$, there occurs at least one exponent between 0 and $p_3$. Otherwise we would have 
$p_3 - 0 > p_1-1=g(B_0)$ which is impossible.
Let $\alpha$ be the largest such exponent.
Then $p_3-\alpha\;\leq \;p_1-1$. Since
\[
    2p_{3} \; > \; \alpha+p_3 \; \geq \; 2p_3-(p_1-1) \; > \; \varphi(p_1p_2)
\]
the exponent $\alpha+p_3$ lies between $\varphi(p_1p_2)$ and $2p_3$ in $B_1.$

\begin{center}
\tiny
\psset{unit=0.19}
\begin{pspicture*}(-3,-11)(75,4)
  \def\poly{
    \psframe[linewidth=1pt,fillstyle=solid,fillcolor=lightgray](-0.5,1)(15,2)
    \psframe[linewidth=1pt,fillstyle=solid,fillcolor=darkgray](-0.5,1)( 0,2)
    \psframe[linewidth=1pt,fillstyle=solid,fillcolor=darkgray](15,1)(15.5,2)
    \psframe[linewidth=1pt,fillstyle=solid,fillcolor=darkgray](7.5,1)(8,2)
    \psframe[linewidth=1pt,fillstyle=solid,fillcolor=white](8,1)(10.5,2)
  }
  \multips(0,0)(11,-2){3}{\poly}
  \multips(48,-8)(11,-2){2}{\poly}  
  \rput(-2,1.5){$B_0$}
  \rput(9,-0.5){$B_1$}
  \rput(20,-2.5){$B_2$}
  \rput(45,-6.5){$B_{p_1-2}$}
  \rput(56,-8.5){$B_{p_1-1}$}
  \rput( 0,3){$0$}
  \rput(15,3){$\varphi(p_1p_2)$}
  \rput(11,-2){$p_3$}
  \rput(21,-4){$2p_3$}
  \rput(61.5,-10){$(p_1-1)p_3$}
    \rput(7.5,3){$\alpha$}
        \rput(19.5,1){$\alpha+p_3$}
  \rput(40,-5){\textbf{$\ldots$}}
\end{pspicture*}
\end{center}
Now we consider the polynomials $E_1, L_1, L_2, \ldots, L_{p_1-2}$ and $E_2$ indicated in the following diagram
\begin{center}
\tiny
\psset{unit=0.19}
\begin{pspicture*}(-3,-15)(75,4)
  \def\poly{
    \psframe[linewidth=1pt,fillstyle=solid,fillcolor=lightgray](-0.5,1)(15,2)
    \psframe[linewidth=1pt,fillstyle=solid,fillcolor=darkgray](-0.5,1)( 0,2)
    \psframe[linewidth=1pt,fillstyle=solid,fillcolor=darkgray](15,1)(15.5,2)
    \psframe[linewidth=1pt,fillstyle=solid,fillcolor=darkgray](7.5,1)(8,2)
    \psframe[linewidth=1pt,fillstyle=solid,fillcolor=white](8,1)(10.5,2)
  }
  \multips(0,0)(11,-2){3}{\poly}
  \multips(48,-8)(11,-2){2}{\poly}  
  \rput(-2,1.5){$B_0$}
  \rput(9,-0.5){$B_1$}
  \rput(20,-2.5){$B_2$}
  \rput(45,-6.5){$B_{p_1-2}$}
  \rput(56,-8.5){$B_{p_1-1}$}
  \rput( 0,3){$0$}
  \rput(15,3){$\varphi(p_1p_2)$}
  \rput(11,-2){$p_3$}
  \rput(21,-4){$2p_3$}
  \rput(60.5,-10){$(p_1-1)p_3$}
    \rput(7.5,3){$\alpha$}
        \rput(19.5,1){$\alpha+p_3$}
  \rput(40,-5){\textbf{$\ldots$}}
   
  \psline[linewidth=1pt, arrows=<->](-0.3,-11)(7.75,-11)
  \psline[linewidth=1pt, arrows=<->](7.75,-11)(18.75,-11)
  \psline[linewidth=1pt, arrows=<->](18.75,-11)(29.75,-11)
  \psline[linewidth=1pt, arrows=<->](55.75,-11)(66.75,-11)
  \psline[linewidth=1pt, arrows=<->](66.75,-11)(74.25,-11)

  \psline[linewidth=0pt,linestyle=solid](-0.3,-11)(-0.3,1)
  \psline[linewidth=0pt,linestyle=solid](7.75,-11)(7.75,1)
  \psline[linewidth=0pt,linestyle=solid](18.75,-11)(18.75,-1)
  \psline[linewidth=0pt,linestyle=solid](29.75,-11)(29.75,-2)
  \psline[linewidth=0pt,linestyle=solid](55.75,-7)(55.75,-11)
  \psline[linewidth=0pt,linestyle=solid](66.75,-9)(66.75,-11)
  \psline[linewidth=0pt,linestyle=solid](74.25,-9)(74.25,-11)
  \rput(5,-12){$E_1$}
  \rput(13,-12){$L_1$}
    \rput(25,-12){$L_2$}
  \rput(62,-12){$L_{p_1-2}$}
  \rput(70,-12){$E_{2}$}
  \rput(40,-12){$\ldots$}
\end{pspicture*}
\end{center}
where $\deg(E_1)=\alpha, \;\; \tdeg(E_2)=\psi(n)-\alpha\;$ and $\;\deg(L_i) = \tdeg(L_{i+1})$.
Since  $L_1,L_2,\;\ldots,\;L_{p_1-2}$ have the same gap structure, we have
\[  g(L_1)=g(L_2)=\cdots=g(L_{p_1-2})
\]
Hence, we have\[
g(A_0) =\max\{\;g(E_1),\;g(E_2),\;g(L_1)\;\}
\] 

From Theorem~\ref{theorem:c2p} and Eq.~(\ref{eq:lambda}), we have
\[
\lambda \;\;>\;\; (p_2-p_1-2)p_3 + p_3 + p_1-1 \;\;>\;\; p_1-1 \;\; = \;\;  g(\Phi_{p_1p_2}) \;\;\geq\;\; g(E_1),\; g(E_2)
\]
Note
\begin{eqnarray*}
g(L_1)     &\leq&  (\alpha+p_3) - \alpha = p_3 
\end{eqnarray*}
From Eq.~(\ref{eq:lambda}), we have
\begin{eqnarray*}
\lambda &=& 2n\;\frac{1}{p_1}-\psi(n)  \\
        &>&  p_3(p_2-p_1-2) + p_3 + p_1-1\\
        &>&  p_3 \\
        &\geq& g(L_1)
\end{eqnarray*}
Thus we have proved that $\lambda > g(A_0)=\max\{g(E_1), \; g(L_1)\}$ when $p_3 \leq \varphi(p_1p_2) < 2p_3 - (p_1-1)$.
\end{clists}
\end{proof}

\begin{lemma}\label{lemma:D1D2_C}
Let $n=p_1p_2p_3$ where $p_1<p_2<p_3$ are odd primes. Then we have
\[
    \s{C1}\;\; \vee \;\; \s{C2}\;\;\; \Longrightarrow \;\;\s{D1}\;\; \vee \;\; \s{D2}
\]
where 
\[
\begin{array}{lrcl}
\medskip      \s{C1}:&  4(p_1-1)      &\leq&p_2\\
\medskip      \s{C2}:&  p_1^2 &\leq&  p_3\\
\medskip      \s{D1}:&  2n\;\frac{1}{p_1}  &>& \frac{4}{3}\deg(\Psi_n) \\
\medskip      \s{D2}:& p_2(p_1-1)& < & 2p_3
\end{array}
\]
\end{lemma}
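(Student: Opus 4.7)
The plan is to prove this implication by case analysis on which of $\s{C1}$ or $\s{C2}$ is assumed, showing that each case forces one of $\s{D1}$ or $\s{D2}$. The key preliminary is to put $\s{D1}$ into a workable form: using $\deg(\Psi_n) = n - \varphi(n) = p_1 p_2 + p_1 p_3 + p_2 p_3 - (p_1 + p_2 + p_3) + 1$, a routine manipulation should reduce $\s{D1}$ to the bilinear inequality
\[
  p_2 p_3 \;>\; 2(p_1-1)(p_2 + p_3 - 1),
\]
which by completing the rectangle becomes
\[
  \bigl(p_2 - 2(p_1-1)\bigr)\bigl(p_3 - 2(p_1-1)\bigr) \;>\; 2(p_1-1)(2p_1-3).
\]
This rectangular form is the target inequality I will verify in each case.

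For the case $\s{C1}$ (so $p_2 \geq 4(p_1-1)$), I expect to obtain $\s{D1}$ directly. The first factor above is at least $2(p_1-1)$, and since $p_3 > p_2 \geq 4(p_1-1)$ the second factor exceeds $2p_1-3$, so the product strictly exceeds $2(p_1-1)(2p_1-3)$.

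For the case $\s{C2}$ (so $p_3 \geq p_1^2$), I would split again: if $\s{D2}$ already holds, there is nothing to prove, and otherwise $\neg\s{D2}$ gives $p_2(p_1-1) \geq 2p_3 \geq 2p_1^2$, hence $p_2 \geq 2p_1^2/(p_1-1)$. Short estimates should then yield
\[
  p_2 - 2(p_1-1) \;\geq\; \frac{2(2p_1-1)}{p_1-1}, \qquad p_3 - 2(p_1-1) \;\geq\; (p_1-1)^2 + 1,
\]
and multiplying gives a product at least $2(2p_1-1)(p_1-1) + 2(2p_1-1)/(p_1-1)$, which strictly exceeds $2(p_1-1)(2p_1-3)$ because $2p_1-1 > 2p_1-3$. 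So $\s{D1}$ holds.

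The hard part is really just the algebraic reformulation of $\s{D1}$ in the first step; once it is written in rectangular form, each case reduces to a one-line estimate. A minor subtlety is that the case $\s{C2}$ cannot be handled by a symmetric analogue of the $\s{C1}$ argument, because the asymmetry $p_2 < p_3$ prevents us from deducing a useful lower bound on $p_2 - 2(p_1-1)$ from $p_3 \geq p_1^2$ alone --- this is precisely what forces the use of $\neg\s{D2}$ to manufacture a lower bound on $p_2$.
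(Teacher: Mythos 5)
Your proof is correct, and I checked the key steps: the identity $\deg(\Psi_n)=p_1p_2+p_1p_3+p_2p_3-(p_1+p_2+p_3)+1$ does reduce $\s{D1}$ to $p_2p_3>2(p_1-1)(p_2+p_3-1)$; the rectangular form $\bigl(p_2-2(p_1-1)\bigr)\bigl(p_3-2(p_1-1)\bigr)>2(p_1-1)(2p_1-3)$ differs from that inequality by the same additive constant $2(p_1-1)$ on both sides, so the two are equivalent; and in each of your cases both factors are positive, so multiplying the lower bounds is legitimate. The paper argues by contrapositive instead: it describes the region $V$ in the $(p_2,p_3)$-plane where $\s{D1}$ and $\s{D2}$ both fail, and bounds $p_2$ and $p_3$ by the coordinates of the intersection points of the $\s{D1}$-boundary curve $h_1=0$ with the line $p_2=p_3$ and with the $\s{D2}$-boundary, computed via the quadratic formula. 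The underlying content is the same --- your Case $\s{C1}$ pairs the $\s{D1}$-boundary with $p_2<p_3$, and your Case $\s{C2}$ pairs it with $\lnot\s{D2}$, mirroring the paper's two intersection computations --- but your execution is more elementary and self-contained: the factorization replaces the quadratic formula, and the direct inequality chain avoids the claim, justified in the paper only by its figure, that the suprema of $p_2$ and $p_3$ over $V$ are attained at those intersection points. Your closing observation is also on target: $\s{C2}$ alone gives no lower bound on $p_2$, so invoking $\lnot\s{D2}$ is genuinely necessary, which is exactly why the lemma's conclusion is the disjunction $\s{D1}\vee\s{D2}$ rather than $\s{D1}$ itself.
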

\begin{proof}
Let $n=p_1p_2p_3$ where $p_1<p_2<p_3$ are odd primes. We will prove the contrapositive.
\[
   \lnot\s{D1} \;\; \wedge \;\; \lnot\s{D2} \;\;\;\; \Longrightarrow \;\;\;\;    \lnot\s{C1} \;\; \wedge \;\; \lnot\s{C2}
\]
Let 
\[
V= \{\;(p_1,p_2,p_3)\;:\; \lnot \s{D1} \;\;\land\;\;\lnot \s{D2}\;\;\land \;\; p_1<p_2 \;\;\land\;\; p_2<p_3 \;\}
\]
It suffices to prove
\[
(p_1,p_2,p_3) \in V  \;\;\;\;\Longrightarrow\;\;\; \lnot\s{C1} \;\; \wedge \;\; \lnot\s{C2}
\]
Note 
\[
V= \{\;(p_1,p_2,p_3)\;:\;h_1 \leq 0 \;\;\land\;\;h_2 \leq 0\;\;\land \;\; h_3 <0 \;\;\land\;\; h_4 <0 \;\}
\] 
where 
\begin{eqnarray*}
  h_1 &=& 2n\;\frac{1}{p_1}  - \frac{4}{3}\deg(\Psi_n)\\
  h_2 &=& 2p_3-p_2(p_1-1)\\
  h_3 &=& p_1-p_2 \\
  h_4 &=& p_2-p_3
\end{eqnarray*}
The shaded area in the plot below shows the cross section of the set $V$ for a fixed $p_1$.
\begin{center}
\tiny
\psset{unit=0.19}
\def\p1{5}
\pspicture(-0.5,-1)(30,31)
\psaxes[ticks=none]{->}(30,30)
\psclip
{
 \pscustom[linestyle=none]
 {%
   \psline(0,0)(0,30.4)
   \lineto(10.5,30.4)
   \psplot{10.5}{30}{2 \p1 1 sub mul x 1 sub mul x 2 \p1 mul sub 2 add div}
   \lineto(30,0)
   \lineto(0,0)
 }
 \pscustom[linestyle=none]
 {%
   \psplot{0}{15}{x 2 div \p1 1 sub mul}
   \lineto(30.4,30.4)
   \lineto(30,0)
   \lineto(0,0)
 }
 \pscustom[linestyle=none]
 {%
   \psline(\p1,0)(\p1,30)
   \lineto(50,30)
   \lineto(50, 0)
 }
 \pscustom[linestyle=none]
 {%
   \psplot{0}{30}{x}
   \lineto(0,30)
   \lineto(0,0)
 }
}
\psframe*[linecolor=lightgray](30,50)
\endpsclip
\psplot{10.5}{30}{2 \p1 1 sub mul x 1 sub mul x 2 \p1 mul sub 2 add div}
\psplot{0}{15}{x 2 div \p1 1 sub mul}
\psline(\p1,0)(\p1,30)
\psplot{0}{30}{x}
\psline[linestyle=dotted](0,25)(16,25)
\psline[linestyle=dotted](16,0)(16,25)
\rput(15, 20){$h_1=0$}
\rput(16, 27){$h_2=0$}
\rput(7.5,2){$h_3=0$}
\rput(23,20){$h_4=0$}
\rput(-1.5,25){$p_1^2$}
\rput(16,-1.5){$4(p_1-1)$}
\endpspicture
\end{center}
By finding the $p_2$ coordinate of the intersection point between  the curves $h_1=0$ and $h_4=0$, we have  
\begin{eqnarray*}
(p_1,p_2,p_3)\in V   &\Longrightarrow&   p_2 \;\;\leq\;\;2(p_1-1)+\sqrt{4p_1^2-10p_1+6} \;\;  \\
                     &\Longrightarrow&   p_2 \;\;<\;\;2(p_1-1)+\sqrt{4(p_1-1)^2}  \\
                     &\Longrightarrow&   p_2 \;\;<\;\; 4(p_1-1) \\
                     &\Longrightarrow&   \lnot \s{C1}
\end{eqnarray*}
By finding the $p_3$ coordinate of the intersection point between  the curves $h_1=0$ and $h_2=0$, we have 
\begin{eqnarray*}
(p_1,p_2,p_3)\in V   &\Longrightarrow&   p_3 \;\;\leq\;\; \frac{1}{2}(p_1-1)\left(p_1+1+\sqrt{p_1^2+2p_1-3}\right) \\
                     &\Longrightarrow&   p_3 \;\;<\;\; \frac{1}{2}(p_1-1)\left(p_1+1+\sqrt{(p_1+1)^2}\right) \\
                     &\Longrightarrow&   p_3 \;\;<\;\; (p_1-1)(p_1+1)  \\
                     &\Longrightarrow&   p_3 \;\;<\;\; p_1^2 \\
                     &\Longrightarrow&   \lnot \s{C2}
\end{eqnarray*}
\end{proof}

\subsection{Proof of Theorem~\ref{theorem:icbound}}
Theorem~\ref{theorem:icbound} follows immediately from Lemma~\ref{lemma:low_bound_op_op_op} and Lemma~\ref{lemma:upper_bound_op_op_op}.
\begin{lemma}\label{lemma:low_bound_op_op_op}
Let $n=p_1p_2p_3$ where $p_1<p_2<p_3$ are odd primes. We have
\[    \max\{\;p_1-1, \;2n\;\frac{1}{p_1}-\deg(\Psi_n)\;\} \;\; \leq \;\; g(\Psi_n)
\]
\end{lemma}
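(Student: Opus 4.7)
The lemma requires two separate lower bounds: $g(\Psi_n)\geq p_1-1$ and $g(\Psi_n)\geq 2n/p_1-\psi(n)$. My plan is to exhibit, for each bound, an explicit pair of consecutive exponents in $\Psi_n$ realizing a gap of the claimed size, using the factorization $\Psi_n(x)=\Phi_{p_1p_2}(x)\cdot\Psi_{p_1p_2}(x^{p_3})$ already exploited in the proof of Lemma~\ref{lemma:D1_op_op_op_exact}.

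For $g(\Psi_n)\geq p_1-1$ I would inspect the low-degree portion of $\Psi_n$. Since
\[
\Psi_{p_1p_2}(x^{p_3}) \;=\; -(1+x^{p_3}+\cdots+x^{(p_1-1)p_3})+(x^{p_2p_3}+\cdots+x^{(p_2+p_1-1)p_3}),
\]
its only term of degree strictly less than $p_3$ is the constant $-1$. Because $p_1<p_3$, it follows that for every $d\leq p_1$ the coefficient of $x^d$ in $\Psi_n$ equals the negative of the coefficient of $x^d$ in $\Phi_{p_1p_2}(x)$. Combining this with Eq.~(\ref{eq:phi_p1p2_initial_gap}), which states $\Phi_{p_1p_2}(x)=1-x+x^{p_1}+(\mbox{terms of degree higher than }p_1)$, I would conclude $\Psi_n(x)=-1+x-x^{p_1}+(\mbox{terms of degree higher than }p_1)$. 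Consequently $1$ and $p_1$ are consecutive exponents in $\Psi_n$, giving the desired gap of size $p_1-1$.

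For $g(\Psi_n)\geq 2n/p_1-\psi(n)$ I would reuse the decomposition $\Psi_n=-A_0+A_1$ from Lemma~\ref{lemma:D1_op_op_op_exact}, where $A_0=\Phi_{p_1p_2}(x)\cdot\Phi_{p_1}(x^{p_3})$ and $A_1=x^{p_2p_3}A_0$. If $2n/p_1-\psi(n)\leq 0$ the bound is trivial since $g(\Psi_n)\geq 0$. Otherwise $\tdeg(A_1)=p_2p_3>\deg(A_0)$, so $A_0$ and $A_1$ have disjoint supports. As both $\Phi_{p_1p_2}$ and $\Phi_{p_1}(x^{p_3})$ have constant term and leading coefficient equal to $1$, the boundary coefficients at $x^{\deg(A_0)}$ in $-A_0$ and at $x^{p_2p_3}$ in $A_1$ equal $-1$ and $+1$ respectively, and neither is cancelled in $\Psi_n$. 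Therefore $\deg(A_0)$ and $p_2p_3$ are consecutive exponents of $\Psi_n$, producing a gap equal to $p_2p_3-\deg(A_0)=2p_2p_3-\psi(n)=2n/p_1-\psi(n)$.

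Neither half appears to pose a serious obstacle; both reduce to inspecting a boundary region of the product and invoking the normal form of $\Phi_{p_1p_2}$ already established in Section~\ref{sec:proof}. The only mild subtlety is ensuring that the four boundary coefficients identified above do not vanish through cancellation, which I would handle by tracking only the constant and leading contributions of each factor, as sketched.
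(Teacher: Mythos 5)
Your proposal is correct and follows essentially the same route as the paper: both halves use the decomposition $\Psi_n=-A_0+A_1$ with $A_0=\Phi_{p_1p_2}(x)\Phi_{p_1}(x^{p_3})$ and $A_1=x^{p_2p_3}A_0$, obtaining the gap $\lambda=2n\frac{1}{p_1}-\deg(\Psi_n)$ between the two blocks (trivially when $\lambda\leq 0$), and invoke Eq.~(\ref{eq:phi_p1p2_initial_gap}) together with $p_1<p_3$ to locate the gap of size $p_1-1$ between the exponents $1$ and $p_1$. Your explicit tracking of the boundary coefficients is a minor extra verification the paper leaves implicit, and you even correct a harmless sign slip in the paper's statement of the low-degree form of $\Psi_n$.
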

\begin{proof}
We recall the diagram in the proof of Lemma~\ref{lemma:D2_op_op_op_exact}:
\[
\underbrack{-\underbrack{         (\underbrack{\Phi_{p_1p_2}(x)}_{B_0}+\ldots+\underbrack{x^{(p_1-1)p_3}\Phi_{p_1p_2}(x)}_{B_{p_1-1}})}_{A_0}
           +\underbrack{x^{p_2p_3}(\underbrack{\Phi_{p_1p_2}(x)}_{B_0}+\ldots+\underbrack{x^{(p_1-1)p_3}\Phi_{p_1p_2}(x)}_{B_{p_1-1}})}_{A_1}}_{\Psi_{p_1p_2p_3}(x)}
\]
Let  $\lambda=\tdeg(A_1)-\deg(A_0)$. Then we have
\[\lambda= p_2p_3-(\deg(\Psi_n)-p_2p_3)=2p_2p_3 -\deg(\Psi_n)=2n\;\frac{1}{p_1} -\deg(\Psi_n)\]
 If $\lambda \leq 0$, then $\lambda \leq g(\Psi_{n})$ obviously. If $\lambda > 0$, there exists a gap between $A_0$ and $A_1$, thus  $\lambda \leq g(\Psi_{n})$.
We recall Eq.~(\ref{eq:phi_p1p2_initial_gap}):
\begin{equation*}
\Phi_{p_1p_2}(x)= 1 -x+x^{p_1} +  \mbox{terms of degree higher than $p_1$}
\end{equation*}
Therefore there exists a gap in $B_0$ of size $p_1-1$. Since $p_1 < p_3$, we have
\begin{equation*}
\Psi_{n}(x) = 1-x+x^{p_1} +  \mbox{terms of degree higher than} \; p_1
\end{equation*}
Hence, $p_1-1 \leq g(\Psi_{n})$.
\end{proof}

\begin{lemma}\label{lemma:upper_bound_op_op_op}
Let $n=p_1p_2p_3$ where $p_1<p_2<p_3$ are odd primes. Then
\[
g(\Psi_n) \;\;<\;\; 2n\left(\frac{1}{p_1}+\frac{1}{p_2}+\frac{1}{p_3}\right)-\deg(\Psi_n)
\]
\end{lemma}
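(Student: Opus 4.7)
The plan is to reduce the claim to the trivial degree bound $g(\Psi_n) \leq \deg(\Psi_n)$ together with a short arithmetic comparison, and then observe that the claimed right-hand side is automatically larger than $\deg(\Psi_n)$.

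First I would establish that $\tdeg(\Psi_n) = 0$. The cleanest way is to use the factorization
\[
\Psi_n(x) \;=\; \Phi_{p_1p_2}(x)\cdot \Phi_{p_1}(x^{p_3})\cdot (x^{p_2p_3}-1)
\]
from Lemma~2 of~\cite{Mor09}, already exploited in the proof of Lemma~\ref{lemma:D1_op_op_op_exact}. Evaluating at $x=0$ gives $1\cdot 1\cdot(-1)=-1\neq 0$, so the constant coefficient of $\Psi_n$ is nonzero. Together with $\deg(\Psi_n)=\psi(n)$, this forces every exponent of $\Psi_n$ into the interval $[0,\psi(n)]$, yielding the trivial bound
\[
g(\Psi_n)\;\leq\;\psi(n).
\]

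Next I would verify the arithmetic inequality $\psi(n) < 2n(\tfrac{1}{p_1}+\tfrac{1}{p_2}+\tfrac{1}{p_3})-\psi(n)$. Expanding $\psi(n)=n-(p_1-1)(p_2-1)(p_3-1)$ gives
\[
\psi(n)\;=\;(p_1p_2+p_1p_3+p_2p_3)-(p_1+p_2+p_3)+1,
\]
while $2n(\tfrac{1}{p_1}+\tfrac{1}{p_2}+\tfrac{1}{p_3})=2(p_1p_2+p_1p_3+p_2p_3)$. Hence the target inequality simplifies to $p_1+p_2+p_3>1$, which is immediate. Chaining this with the trivial bound from the first step gives the claim.

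I do not expect any real obstacle: the content of the lemma is essentially the observation that the right-hand side of the bound already exceeds $\deg(\Psi_n)$ by $2(p_1+p_2+p_3)-2$, so the upper bound on $g(\Psi_n)$ is automatic. The only points that merit care are the nonvanishing of the constant term of $\Psi_n$ (handled cleanly by the factorization) and the routine expansion of $\psi(n)$.
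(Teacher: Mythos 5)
Your proof is correct, and it is genuinely different from --- and far more elementary than --- the paper's. You observe that every exponent of $\Psi_n$ lies in $[0,\deg(\Psi_n)]$ (the constant term is nonzero, e.g.\ by evaluating the factorization at $x=0$, though even this is not strictly needed since $\tdeg\geq 0$ always gives $g(f)\leq\deg(f)$), so $g(\Psi_n)\leq\deg(\Psi_n)$, and then a direct expansion shows the right-hand side equals $\deg(\Psi_n)+2(p_1+p_2+p_3)-2>\deg(\Psi_n)$. Both computations check out: $\deg(\Psi_n)=p_1p_2+p_1p_3+p_2p_3-(p_1+p_2+p_3)+1$ and $2n(\tfrac1{p_1}+\tfrac1{p_2}+\tfrac1{p_3})=2(p_1p_2+p_1p_3+p_2p_3)$, so the stated inequality reduces to $p_1+p_2+p_3>1$. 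The paper instead proves the intermediate bound $g(\Psi_n)\leq\max\{p_1-1,\ \deg(\Psi_n)-2(p_3-(p_1-1))\}$ by locating the largest exponent $\alpha<p_3$ occurring in $\Psi_n$ and splitting the polynomial into three blocks, and then checks that the claimed $U$ dominates both entries of the max. That route is more work but yields a structurally sharper estimate (relevant to the paper's remark that the upper bound can likely be improved); your route shows that the lemma \emph{as stated} is an immediate consequence of the trivial degree bound. If you intend your argument as a replacement for the paper's, it suffices for Theorem~\ref{theorem:icbound}, but it discards the stronger information the paper's Claim~1 provides.
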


\begin{proof}
Let $ U= 2n(\frac{1}{p_1}+\frac{1}{p_2}+\frac{1}{p_3}) -\deg(\Psi_n)$. Then Lemma follows from the following Claims.
\begin{clists}
\citem{Claim 1:} $g(\Psi_n)                  \leq \max\{p_1-1, \deg(\Psi_n)-2(p_3-(p_1-1))\}.$\\
Let $\alpha$ be the largest exponent less than $p_3$  occurring in $\Psi_n$ and $\beta=\psi(n)-\alpha$.
\begin{center}
\tiny
\psset{unit=0.19}
\begin{pspicture*}(-3,-13)(75,4)
  \def\poly{
    \psframe[linewidth=1pt,fillstyle=solid,fillcolor=lightgray](0,0)(40,2)
    \psframe[linewidth=0pt,fillstyle=solid,fillcolor=darkgray](0,0)(0.5,2)
    \psframe[linewidth=1pt,fillstyle=solid,fillcolor=darkgray](3,0)(3.5,2)
    \psframe[linewidth=1pt,fillstyle=solid,fillcolor=white](3.5,0)(5,2)
    \psframe[linewidth=1pt,fillstyle=solid,fillcolor=white](34.5,0)(36,2)
    \psframe[linewidth=1pt,fillstyle=solid,fillcolor=darkgray](36,0)(36.5,2)
    \psframe[linewidth=1pt,fillstyle=solid,fillcolor=darkgray](39.5,0)(40,2)
  }
  \multips(0,0)(40,-3){1}{\poly}
  \rput(-2,1){$\Psi_n$}

  \rput( 3,3){$\alpha$}
  \rput( 5.5,3){$p_3$}
  \rput( 36.5,3){$\beta$}

  \psline[linewidth=1pt, arrows=<->](0.25,-9)(3.25,-9)
  \psline[linewidth=1pt, arrows=<->](3.25,-9)(36.25,-9)
  \psline[linewidth=1pt, arrows=<->](36.25,-9)(39.75,-9)

  \psline[linewidth=0pt,linestyle=solid](0.25,-9)(0.25,0)
  \psline[linewidth=0pt,linestyle=solid](3.25,-9)(3.25,0)
  \psline[linewidth=0pt,linestyle=solid](36.25,-9)(36.25,0)
  \psline[linewidth=0pt,linestyle=solid](39.75,-9)(39.75,0)

  \rput(2,-11){$C_1$}
  \rput(21,-11){$C_2$}
  \rput(38,-11){$C_3$}
   \end{pspicture*}
\end{center}
Then we have
\[
g(\Psi_n) = \max\{g(C_1),g(C_2),g(C_3)\}
\]
Note that $g(C_1)=g(C_3) \leq p_1-1$ and $g(C_2) \leq \psi(n)-2\alpha$.
Since $\alpha \geq p_3-(p_1-1)$, we have 
$$g(C_2) \leq \psi(n)-2(p_3-(p_1-1))$$
Therefore, we have 
\[
g(\Psi_n) \; \leq \; \max\{p_1-1, \deg(\Psi_n)-2(p_3-(p_1-1))\}
\]

\citem{Claim 2:} $U\;>\;p_1-1.$\\
Note that
\begin{eqnarray*}
&&U - (p_1-1) \\
        &=& 2n(\frac{1}{p_1}+\frac{1}{p_2}+\frac{1}{p_3})-\deg(\Psi_n)-(p_1-1) \\
        &=& 2(p_1p_2+p_2p_3+p_3p_1)-(p_1p_2p_3-(p_1-1)(p_2-1)(p_3-1))-(p_1-1)\\
        &=& 2(p_1p_2+p_2p_3+p_3p_1)- p_1p_2p_3 +(p_1-1)(p_2p_3-p_2-p_3)   \\
        &=&  2(p_1p_2+p_2p_3+p_3p_1)- p_1p_2p_3 + p_1p_2p_3-p_1p_2-p_1p_3-p_2p_3+p_2+p_3 \\
        &=&  2(p_1p_2+p_2p_3+p_3p_1) -p_1p_2-p_1p_3-p_2p_3+p_2+p_3 \\
        &=&    p_1p_2+p_2p_3+p_3p_1 +p_2+p_3 \\
        &>& 0
\end{eqnarray*}
\citem{Claim 3:} $U \;>\; \deg(\Psi_n)-2(p_3-(p_1-1)).$\\
Note that
\begin{eqnarray*}
&&U - \left(\deg(\Psi_n)-2(p_3-(p_1-1))\right) \\
        &=& 2n(\frac{1}{p_1}+\frac{1}{p_2}+\frac{1}{p_3})-2\deg(\Psi_n)+2(p_3-(p_1-1)) \\
                      &=& 2(p_1p_2+p_2p_3+p_3p_1)-2(p_1p_2p_3-(p_1-1)(p_2-1)(p_3-1))+2p_3-2(p_1-1)\\
                      &=& 2(p_1p_2+p_2p_3+p_3p_1)-2(p_1p_2+p_2p_3+p_3p_1-p_1-p_2-p_3+1)+2p_3-2(p_1-1)\\
                      &=& 2p_1+2p_2+2p_3-2 +2p_3-2p_1+2\\
                      &=& 2p_2+4p_3 \\
                      &>& 0
\end{eqnarray*}

\end{clists}
\end{proof}

\end{document}